\newtheorem{theorem}{Theorem}
\newtheorem{lemma}{Lemma}
\newtheorem{corollary}{Corollary}
\newtheorem{definition}{Definition}
\newtheorem{remark}{Remark}
\def\p{\varphi}
\def\phi{\varphi}
\def\po{\p^\circ}
\def\Pp{P_\p}
\def\R{\mathbb{R}}
\def\Div{\mathrm{div}\,}
\def\e{\varepsilon}
\def\Wp{{W}_\p}
\def\Wpe{{W}_{\p_\e}}
\def\dpE{d_\p^E}
\def\npE{n_\p^E}
\def\wE{\widetilde{E}}
\def\wsto{\stackrel{*}{\rightharpoonup}}
\def\HH{\mathcal{H}}
\def\Lip{\textup{Lip}}
\DeclareMathOperator*{\essinf}{ess\,inf}
\DeclareMathOperator*{\esssup}{ess\,sup}
\begin{document}

\title[Crystalline curvature flow]{Existence and uniqueness for planar
anisotropic and crystalline curvature flow}

\author[Chambolle-Novaga]{A. Chambolle, M. Novaga}


\address{CMAP, Ecole Polytechnique, CNRS, 91128 Palaiseau, France,\\ 
{\tt antonin.chambolle@cmap.polytechnique.fr}
\\[2mm] 
Dipartimento di Matematica, Universit\`a di Pisa,\! Largo Pontecorvo 5,
56127 Pisa, Italy, 
\\{\tt novaga@dm.unipi.it}
}


\rcvdate{Month Day, Year}
\rvsdate{Month Day, Year}


\subjclass[2000]{Primary 53C44, 74N05, 74E10; Secondary: 35K55.}


\keywords{Anisotropy, Implicit variational scheme, Geometric evolutions, Crystal growth}

\begin{abstract}
We prove short-time existence of $\p$-regular solutions to the planar anisotropic curvature flow, 
including the crystalline case, with 
an additional forcing term possibly unbounded and discontinuous in time, such as for instance a white noise. 
We also prove uniqueness of such solutions when the anisotropy is smooth and elliptic. 
The main tools are the use of an implicit variational scheme in order to define the evolution, 
and the approximation with flows corresponding to regular anisotropies.
\end{abstract}

\maketitle

\tableofcontents


\section{Introduction}

In this paper we consider the
anisotropic curvature flow of planar curves, corresponding to the evolution law
\begin{equation}\label{formalevol}
V\ =\ \kappa_\p + \frac{\partial G}{\partial t}
\end{equation}
in the Cahn-Hoffmann direction $n_\p$.
We shall assume that the forcing term  $G$ has the form
$G=G_1+G_2$ with $G_1,\,G_2$ satisfying:
\begin{enumerate}
\item[i)] $G_1\in C^0([0,\infty))$ does not depend on $x$;
\item[ii)] $G_2\in C^1([0,\infty);{\rm Lip}(\R^2))$.
\end{enumerate}

Observe that \eqref{formalevol} is only formal, as $\partial G_1/\partial t$ does not
necessarily exist, however the motion can still be defined
in an appropriate way (see Definition \ref{defpregflow}). Notice also that we
include the case of $G$ being a typical path of a Brownian
motion, which is necessary to take into account a stochastic forcing
term as in~\cite{DLN,LionsSouga}.
\smallskip

In the smooth anisotropic case, the
first existence and uniqueness results of a classical evolution in can be found in~\cite{An:90},
where S. Angenent showed existence, uniqueness and comparison for a
class of equations which include \eqref{formalevol} in the case $G=G_2$ and $\p$ regular.
The existence and uniqueness of a weak solution for the forced flow, with a Lipschitz continuous forcing term,
follows from standard viscosity theory \cite{UsersGuide,CGG}.

The crystalline curvature flow was mathematically formalized by
J.~Taylor in a series of papers (see for instance~\cite{Ta:78,Ta:93}). In two-dimensions, the existence of the flow
in the non forced case $G=0$ reduces to the analysis of a
system of ODEs. It was first shown by F.J. Almgren and J. Taylor in \cite{AT}, together with a proof of consistency of a variational scheme similar to the
one introduced in Section \ref{secATW}. The uniqueness and comparison
principle in the non forced case were established shortly after by
Y. Giga and M.E. Gurtin in \cite{GiGu:96}.
The forced crystalline flow was studied in~\cite{BGN:00}, however with
strong hypotheses on the forcing to ensure the preservation of the facets.

It is only in relatively recent work that the flow has been studied
with quite general forcing terms: in~\cite{GigaGigaRybka,GigaGorkaRybka} 
a Lipschitz forcing is considered. However, \cite{GigaGigaRybka} is
restricted to the evolution of graphs (although with a general mobility),
while \cite{GigaGorkaRybka} only considers rectangular anisotropies, and
assumes that the initial datum is close to the Wulff shape.
The paper~\cite{mcmf} deals with quite general forcing terms (roughly,
the same as in this paper), but requires the anisotropy to be smooth.
It shows the consistency of the variational scheme and a comparison
for regular evolutions. In~\cite{BCCN-vol}, the authors show the
existence of convex crystalline evolutions (extending their results
of \cite{BCCN}) with time-dependent (bounded) forcing terms and
apply it to show the existence of volume preserving flows.

We show here a general existence result for the two-dimensional crystalline
curvature flow, with ``natural'' mobility, which holds in two cases:
for a general forcing $G=G_1$ depending only on time, and for a regular
forcing $G=G_2$ with $\partial G_2/\partial t$ continuous in time and Lipschitz continuous in space.

Our proof relies on estimates for the variational scheme introduced
in~\cite{ATW,LuckhausSturz}, which show that, if the initial curve
has a strong regularity (expressed in terms of an internal and external
Wulff shape condition), then this regularity is preserved for some
time which depends only on the initial radius. Extending these proofs
to higher dimension would require quite strong regularity results
for nonlinear elliptic PDEs, which do not seem available at a first
glance. 

The paper is organized as follows: in the Section \ref{secreg} we 
define the ``anisotropy'' and introduce our notion of a ``regular''
curvature  flow for smooth and nonsmooth anisotropies. In Section~\ref{secATW}
we study the time-discrete implicit scheme of~\cite{ATW}, and
extend some regularity results of~\cite{BCCN} to the flow with forcing.
We then show in Section \ref{secsmooth} the main existence results, for
smooth anisotropies. The fundamental point is that the time of existence
only depends on the regularity of the initial curve. In the smooth
case, we also show uniqueness of regular evolutions. Eventually,
in Section~\ref{seccrystal} we extend the existence result to the crystalline
case, however this simply follows from an elementary approximation
lemma (Lemma~\ref{lemmapproxcrystal}), and the fact that the time of existence is uniformly
controlled in this approximation.

\section{$RW_\p$-condition and $\p$-regular flows}\label{secreg}

We call {\it anisotropy} a function $\p$ 
which is convex, one-homogeneous and coercive on $\R^2$.
We will also assume that $\p$ is even, i.e. $\p$ is a norm,
although we expect that the results of this paper
still hold in the general case (but some proofs become more
tedious to write).

We will always assume that there exists $c_0>0$ such that
\begin{equation}\label{coerbound}
c_0 |x|\ \le\ \p(x)\ \le \ c_0^{-1}|x|
\qquad \forall x\in\R^2.
\end{equation}
We denote by $\po$ the polar of $\p$, defined as
$$
\po(\nu) := \sup_{x:\,\p(x)\le 1} \nu\cdot x
\qquad \nu\in\R^2,
$$
it is obviously also a convex, one-homogeneous and even function on $\R^2$.
Notice that from \eqref{coerbound} it easily follows
$$
c_0 |\nu|\ \le\ \po(\nu)\ \le \ c_0^{-1}|\nu|
\qquad \forall \nu\in\R^2.
$$
We denote by $W_\p:=\{\p\le 1\}$ the unit ball 
of $\p$, which is usually called the {\it Wulff shape}.

We say that $\p$ is {\it smooth} if $\p\in C^2(\R^2\setminus\{0\})$ and $\p$ is {\it elliptic}
if $\p^2$ is strictly convex, that is $\nabla^2(\p^2)\ge c\,{\rm Id}$ in the distributional sense, for some $c>0$.
It is easy to check that $\p$ is smooth and elliptic iff $\po$ is smooth and elliptic. 

Given a set $E\subset\R^2$ we let $\dpE$ be the signed $\p$-distance function to $\partial E$
defined as
\[
\dpE(x) := \min_{y\in E} \p(x-y) -\min_{y\not\in E} \p(y-x)\,,
\]
We let $\nu_\p^E:=\nabla d^E_\p$
be the exterior $\p$-normal to $\partial E$, $n_\p\in \partial\po(\nu_\p)$
be the so-called {\it Cahn-Hoffmann} vector field (where $\partial$ denotes the subdifferential), 
and $\kappa_\p:={\rm div}n_\p$ be the $\p$-curvature of $\partial E$,
whenever they are defined. We also set $E^c:=\R^2\setminus E$.

Following \cite{BeNo:99} we give the following definition:

\begin{definition}[{\bf $RW_\p$-condition}]\label{defreg}
We say that a set $E$
satisfies the inner $R\Wp$-condition for some $R>0$ if 
\begin{equation}\label{innerballcond}
\overline E=\bigcup_{x:\, \dpE(x)\le -R} \left(x+R\Wp\right)
\end{equation}
and for any $r<R$ and $x\in\R^2$, $(x+r\Wp)\cap E^c$ is connected.

We say that $E$ satisfies the outer $R\,\Wp$-condition if 
its complementary $E^c$ satisfies the 
inner $R\,\Wp$-condition.

We say that $E$ satisfies the $R\,\Wp$-condition if it satisfies
both the inner and outer $R\,\Wp$-conditions.
\end{definition}

\begin{remark}\label{remando}\rm
Notice that, if $E$ satisfies the $R\,\Wp$-condition for some $R>0$, 
then $\partial E$ is locally a Lipschitz graph. Moreover, when $\p$ is smooth,
the $R\,\Wp$-condition implies that $\partial E$ is of class $C^{1,1}$ and
$\vert\kappa_\p\vert \le 1/R$ a.e.~on $\partial E$. 
In this case the connectedness condition in Definition \ref{defreg}
is automatically satisfied whenever \eqref{innerballcond} holds.
However, in the nonsmooth case one can have
some pathological examples if one removes the connectedness condition, as the one depicted in Fig.~\ref{BadCrystal}
when the Wulff shape is a square.
\end{remark}

\begin{remark}\label{ramengo}\rm
It is not difficult to show that $E$ satisfies the inner $R\,\Wp$-condition iff \eqref{innerballcond} holds and
the following property holds: for all $x$ such that $\dpE(x)=-R'>-R$, the set $\partial E\cap (x+R'\partial W_\p)$
is connected. 

By \eqref{innerballcond} it follows that $\partial E\cap (x+R'\partial W_\p)$ is either a segment (possibly a point)
or the union of two segments. In particular,
if $\p$ is elliptic, this is equivalent to say that there exists a unique point in $\partial E$
minimizing the $\p$-distance from $x$.
\end{remark}

\begin{figure}
\begin{center}
\includegraphics[height=5.0cm]{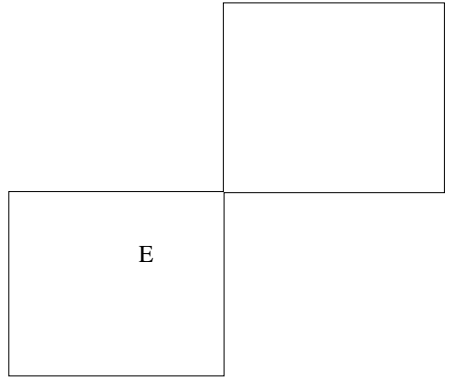}
\caption{\small{The pathological set described in Remark \ref{remando}.}}
\label{BadCrystal}
\end{center}
\end{figure}

\begin{definition}[{\bf $\p$-regular flows}]\label{defpregflow}
We say that a map $[0,T]\ni t\to \mathcal P(\R^2)$ defines a 
$\p$-regular flow for \eqref{formalevol} if 
\begin{enumerate}
\item $E(t)$ satisfies the $RW_\p$-condition for all $t\in [0,T]$ and for some $R>0$;
\item there exist an open set $U\subset\R^2$ 
and a vector field $z\in L^\infty([0,T]\times U;\R^2)$ such that 
\begin{enumerate}
\item $\partial E(t)\subset U$ for all $t\in [0,T]$,
\item $\dpE(t,x):=d^{E(t)}_\p(x)\in C^0([0,T];\Lip(U))$,
\item $z\in \partial\po(\nabla \dpE)$ a.e. in $[0,T]\times U$,
\item ${\rm div}z\in L^\infty([0,T]\times U)$;
\end{enumerate}
\item there exists $\lambda>0$ such that, for any $t,s$ with $0\le t<s \le T$ and a.e.~$x\in U$, there holds
\begin{equation} \label{smoothy}
\left|\dpE(s,x)-\dpE(t,x)
-\int_s^t\Div z(\tau,x)\,d\tau
-G(s,x)+G(t,x)\right|
\le \lambda(s-t)\max_{t\le \tau\le s}|\dpE(x,\tau)|\,.
\end{equation} 
\end{enumerate}
\end{definition}
\noindent Observe that~\eqref{smoothy} implies that $(d-G)$ is Lipschitz continuous,
so that \eqref{smoothy} can be rewritten as
\begin{equation}\label{smoothy2}
\left|\frac{\partial (\dpE-G)}{\partial t}(t,x)
-\Div\nabla \po(\nabla \dpE)(t,x)\right|\ \le\ \lambda|\dpE(t,x)|\,.
\end{equation}
for a.e.~$(t,x)\in [0,T]\times U$.
In case $G$ is $C^1$ in time, 
equation~\eqref{smoothy2} expresses the fact that 
$\partial E(t)$ evolves with speed given by \eqref{formalevol}.

\subsection{An approximation result.}

We now show that, given any set $E$ satisfying
the $R\Wp$-condition
for a general anisotropy $\p$,  
there exist smooth and elliptic
anisotropies $\p_\e\to\p$ and sets $E_\e\to E$, as $\e\to 0$, such that
$E_\e$ satisfies the $RW_{\p_\e}$-condition.
%
\begin{lemma}\label{lemmapproxcrystal}
Let $\p$ be a general
anisotropy and let $\p_\e$ be 
smooth and elliptic anisotropies converging to $\p$, with $\p_\e\ge \p$.
Let $E\subseteq\R^2$ satisfy the $R W_\p$-condition
for some $R>0$. Then there exist sets $E_\e$, with
$\partial E_\e\to \partial E$ as $\e\to 0$ in the Hausdorff sense, such that 
each $E_\e$ satisfies the $R\Wpe$-condition.
\end{lemma}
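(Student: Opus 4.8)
The plan is to exploit the nesting $\Wpe=\{\p_\e\le 1\}\subseteq\{\p\le 1\}=\Wp$, together with the Hausdorff convergence $\Wpe\to\Wp$ coming from $\p_\e\to\p$ with $\p_\e\ge\p$, and to rebuild $\partial E$ out of translated copies of $R\Wpe$ so that \emph{both} the inner and the outer $R\Wpe$-condition hold. Since $\p_\e$ is smooth and elliptic, by Remark~\ref{remando} it is enough to produce a set $E_\e$ close to $E$ whose boundary carries, at every point, an inner \emph{and} an outer osculating copy of $R\Wpe$; equivalently $\partial E_\e\in C^{1,1}$ with $|\kappa_{\p_\e}|\le 1/R$. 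I would first record, using \eqref{innerballcond}, the two ``skeleta'' of $E$: the inner centres $A:=\{\,\dpE\le -R\,\}$, for which $\overline E=A\oplus R\Wp$, and the outer centres $B:=\{\,\dpE\ge R\,\}$, for which $\overline{E^c}=B\oplus R\Wp$. These two identities are exactly the inner and the outer $R\Wp$-condition, and the same radius $R$ appears in both.

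The natural candidate for the inner part is $A\oplus R\Wpe$: it is a union of translates of $R\Wpe$, hence invariant under opening by $R\Wpe$, so it satisfies the inner $R\Wpe$-condition, and since $\Wpe$ is smooth and strictly convex its boundary has $\kappa_{\p_\e}\le 1/R$ away from corners, while any facet of $\partial E$ (where $A$ is locally a segment) is reproduced as a flat piece with $\kappa_{\p_\e}=0$. The difficulty is that $A$ inherits the reentrant corners of $E$ (those matching the corners of $\Wp$ in the crystalline case), and dilating a reflex corner by the smooth body $R\Wpe$ leaves it sharp, so $A\oplus R\Wpe$ violates the \emph{outer} condition there; dually, $(B\oplus R\Wpe)^c$ has the outer condition but keeps the convex corners. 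I would therefore round both families of corners at once. Introducing the opening $O(X):=(X\ominus R\Wpe)\oplus R\Wpe$ and the closing $C(X):=(X\oplus R\Wpe)\ominus R\Wpe$, I set $E_\e:=C\big(O(\overline E)\big)$, so that convex corners are replaced by arcs of $\partial(R\Wpe)$ with $\kappa_{\p_\e}=+1/R$, reentrant corners by arcs with $\kappa_{\p_\e}=-1/R$, and the facets are left flat.

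The inner $R\Wpe$-condition for $E_\e$ then follows because the opening makes $\partial E_\e$ locally a rolled copy of $R\Wpe$ (hence $\kappa_{\p_\e}\le 1/R$), and the outer one because the closing supplies an outer osculating $R\Wpe$ at every point; the transitions between arcs and facets are $C^1$ since the osculating Wulff shape is tangent to the facet, giving $\partial E_\e\in C^{1,1}$ with $|\kappa_{\p_\e}|\le 1/R$. The Hausdorff convergence $\partial E_\e\to\partial E$ is immediate, as $\Wpe\to\Wp$ forces every rounding to have size tending to $0$ with $\e$, and the connectedness clause of Definition~\ref{defreg} holds because all modifications are localised near the corners and arbitrarily small. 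The step I expect to be the main obstacle is the \emph{compatibility} of the two roundings: one must verify that closing the opened set to cure the reentrant corners does not recreate a violation of the inner condition, and conversely. This is exactly where it matters that $E$ satisfies the inner and the outer $R\Wp$-condition with the \emph{same} radius $R$, for this confines the convex-corner and the reentrant-corner corrections to disjoint portions of $\partial E$ and keeps them a fixed distance apart; hence for $\e$ small the opening and the closing act independently, and the composite set $E_\e$ inherits both $R\Wpe$-conditions.
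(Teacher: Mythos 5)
Your construction coincides exactly with the paper's: its $\wE_\e$ is your opening $O(\overline E)$ by $R\Wpe$, its $E_\e$ is your $C\big(O(\overline E)\big)$, and both proofs start from the same observation that the opening has the inner $R\Wpe$-condition for free, the closing has the outer one for free, and the entire content of the lemma is that the closing does not destroy the inner condition. The gap is in how you dispose of that last point. You claim that, because $E$ satisfies the inner and outer $R\Wp$-conditions with the \emph{same} radius, the convex corners (rounded by $O$) and the reentrant corners (rounded by $C$) lie on disjoint portions of $\partial E$ ``a fixed distance apart'', so that for small $\e$ the two operations act independently. This is false. Take $\p$ the $\ell^\infty$ norm, so that $\Wp$ is a square, and let $\partial E$ contain an ascending staircase with axis-parallel steps of heights $h_k\to 0$, embedded in the boundary of a large compact set. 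Each step carries one convex and one reentrant corner at mutual distance $h_k$, and such a set satisfies the $R\Wp$-condition for a fixed $R$ independent of the $h_k$: right angles aligned with the corners of $\Wp$ are exactly the corners this condition tolerates, and both the covering \eqref{innerballcond} and the connectedness clauses of Definition~\ref{defreg} are readily checked for hypographs of monotone step functions. For each fixed $\e$, the roundings produced by $O$ and by $C$ have a definite positive size (of order $R\,d_\HH(\Wp,\Wpe)$ near a corner of $\Wp$), so they overlap on every step with $h_k$ below that scale, no matter how small $\e$ is. The regime your argument excludes is therefore unavoidable, and it is precisely the regime in which the inner condition of $C\big(O(\overline E)\big)$ is in doubt.

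The paper controls this interaction not by separation but by a connectedness argument, and that is the idea missing from your proposal. Its Step~2 proves that the opened set $\wE_\e$ still satisfies the \emph{outer} $R\Wp$-condition: for each arc $\ell_x$ of some $\partial(x_1+R\Wpe)$ created by the opening, the connectedness clause of the \emph{inner} $R\Wp$-condition of $E$ is used to close $\ell_x$ into a region $S_x\subset E$, an exterior tangent translate $y+R\Wp$ at $x$ is then produced, and the connectedness clause of the \emph{outer} condition of $E$ together with Remark~\ref{ramengo} concludes. Step~3 then repeats this argument verbatim with $E$ replaced by $\wE_\e^c$ (which, by construction and by Step~2, enjoys the inner $R\Wp$- and outer $R\Wpe$-conditions), yielding the inner $R\Wpe$-condition for $E_\e$. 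So your instinct that both conditions with the same radius are essential is right, but the mechanism is connectedness, not distance. A secondary flaw: your reduction of the $R\Wpe$-condition to ``$\partial E_\e\in C^{1,1}$ with $|\kappa_{\p_\e}|\le 1/R$'' via Remark~\ref{remando} uses that remark backwards. It only states that the ball condition implies the curvature bound; the converse fails because curvature is local while the ball conditions are not (a thin horseshoe, capped and bent by arcs of $\p_\e$-curvature $1/R$, has $|\kappa_{\p_\e}|\le 1/R$ everywhere but only a $\delta$-scale outer condition in its gap of width $\delta$). This is exactly why Lemma~\ref{lemmWC} needs the sandwich hypothesis $E_\delta\subset F\subset E^\delta$ and still concedes a loss in the radius.
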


\begin{proof}
Let

\begin{eqnarray*}
\wE_\e &:=& \bigcup\left\{(x+RW_{\p_\e}):\,(x+RW_{\p_\e})\subset \overline E\right\}
\\
E_\e &:=& \R^2\setminus \bigcup\left\{ (x+RW_{\p_\e}):\,(x+RW_{\p_\e})\subset \overline{\wE_\e^c}\,\right\}.
\end{eqnarray*}
Notice that, by definition, $\wE_\e$ satsifies the innner $R\Wpe$-condition and $E_\e$ satisfies
the outer $R\Wpe$-condition, so that we have to prove that 
$E_\e$ also satisfies
the inner $R\Wpe$-condition.

\smallskip

\noindent {\it Step 1.} Let us show that $\partial E_\e\to \partial E$ as $\e\to 0$, in the Hausdorff sense.
In fact, this is obvious from the construction: since $\Wpe\subset\Wp$
and for any $x\in E$, there exists $z\in E$ with  $x\in z+R\Wp\subset E$,
we see that the distance from $x$ to $\wE_\e$ (and then $E_\e$)
is bounded  by the Hausdorff distance between $R\Wp$ and $R\Wpe$.
An estimate for the complement can be derived in the same way,
so that $d_\HH(\partial E_\e,\partial E)\le R d_\HH(\Wp,\Wpe)$.

%
%


\smallskip

\noindent {\it Step 2.} We now prove that 
$\wE_\e$ satisfies the outer $RW_{\p}$-condition.
We first show that, for all $x\in \partial\wE_\e$,
there exists $y$ such that 
\begin{equation}\label{equanime} 
\wE_\e\subset (y+RW_{\p})^c \qquad {\rm and}\qquad 
x\in \partial(y+RW_{\p})\,.
\end{equation}
Indeed, if $x\in \partial\wE_\e\cap \partial E$, \eqref{equanime} readily follows  
from the fact that $E$ satisfies the outer $RW_\p$-condition.

If $x\in \partial\wE_\e\setminus \partial E$,
then by definition of $\wE_\e$ there exists $x_1\in\R^2$ such that 
$$
(x_1+RW_{\p_\e})\subseteq \wE_\e \qquad {\rm and}\qquad x\in\partial (x_1+RW_{\p_\e})\,.
$$

Let $\ell_x$ be the maximal arc of $\partial (x_1+RW_{\p_\e})$ containing $x$ and contained in the interior of $E$,
and let $y_1,y_2\in\partial E$ be the endpoints of $\ell_x$. Notice that $\p_\e(y_1-y_2)<2R$.
Let  $y_3:=(y_1+y_2)/2$,
$R':=\p(y_1-y_2)/2\le \p_\e(y_1-y_2)/2<R$. 
As $E$ satisfies the inner $RW_\p$-condition, the set $(y_3+R'W_\p)$ has connected intersection with $E^c$, 
so that the set $(\partial E\setminus\partial \wE_\e)\cap (y_3+R'W_\p)$ 
contains a connected arc $\tilde\ell_x$ joining $y_1$ and $y_2$ (see Figure \ref{oneball}). 

Let $S_x$ be the subset of $E$ such that $\partial S_x=\ell_x\cup\tilde\ell_x$ and set 
$E':=\wE_\e\cap (y_3+R'W_\p)$. Notice that $S_x\subset E'$. As $E'$ is a convex set, there exists $y$ such that 
$x\in \partial (y+RW_\p)$ and $E'\subset (y+RW_\p)^c$. Moreover, since $E$ satisfies the outer $RW_\p$-condition, 
the set $E\cap \,{\rm int}(y+RW_\p)\supseteq S_x\cap \,{\rm int}(y+RW_\p)$ is connected. This implies that 
$\wE_\e\subset (y+RW_\p)^c$ and proves \eqref{equanime}.

In order to prove that $\wE_\e$ satisfies the outer $RW_\p$-condition, by Remark \ref{ramengo}
it remains to show that, given $\bar x$ with $d_\p^{\wE_\e}=R'<R$, the set  
$\partial \wE_\e\cap (\bar x+R'\partial W_\p)$ is connected. 

If $\partial \wE_\e\cap (\bar x+R'\partial W_\p)\subset\partial E$ this follows directly from the fact that 
$E$ satisfies the  outer $RW_\p$-condition. Otherwise, there exists 
$x\in (\partial \wE_\e\setminus \partial E)\cap (\bar x+R'\partial W_\p)$. In this case we claim that 
$\partial \wE_\e\cap (\bar x+R'\partial W_\p)=\{x\}$. Indeed, since  $\ell_x$ 
is a strictly convex arc, we have $\ell_x\cap (\bar x+R'\partial W_\p)=\{x\}$. Hence, if 
$\partial \wE_\e\cap (\bar x+R'\partial W_\p)$ contains another point $y\ne x$, then 
$y\not\in\overline{S_x}$. As $S_x\cap (\bar x+R' W_\p)\ne\emptyset$, it follows 
that $E\cap(\bar x+(R'+\delta)W_\p)$ contains at least
two connected components for $\delta>0$ sufficiently small, contradicting the fact that 
$E$ satisfies the  outer $RW_\p$-condition. Hence the set $(\partial E\setminus\partial \wE_\e)\cap (y_3+R'W_{\p_\e})$ 
is a connected arc $\tilde\ell_x$ joining $y_1$ and $y_2$. 

\smallskip 

\noindent {\it Step 3.} We prove that $E_\e$ satisfies the inner $R\Wpe$-condition by 
reasoning as in {\it Step 2}, with $E$ replaced by $(\wE_\e)^c$ (and $\p$ replaced by $\p_\e$). 
The only difference is due to the fact that 
$(\wE_\e)^c$ now satisfies inner $RW_{\p}$-condition and the outer $RW_{\p_\e}$-condition.
Therefore, letting $R':=\p_\e(y_1-y_2)/2<R$, the set $(y_3+R'W_{\p})\cap \wE_\e$
is connected, so that $(\partial \wE_\e\setminus\partial E_\e)\cap (y_3+R'W_{\p_\e})$ 
contains a connected arc joining $y_1$ and $y_2$. In the rest of the proof one can proceed as in {\it Step 2}.

\begin{figure}
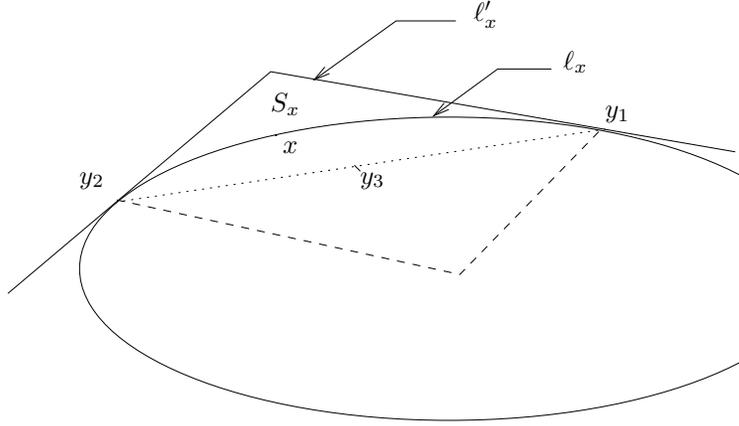

\begin{center}
\include{oneball}
\caption{\small{The configuration in Lemma \ref{lemmapproxcrystal}.}}
\label{oneball}
\end{center}
\end{figure}
\end{proof}

Lemma \ref{lemmapproxcrystal} has the following direct consequence.

\begin{corollary}
Let $E\subseteq\R^2$ satisfy
the $R W_\p$-condition
for some $R>0$. Then $E$ is {\it $\p$-regular} in the sense of \cite{BeNo:99}, that is,
there exists a
vector field $n_\p\in L^\infty(\{|d^E_\p|<R\},\R^2)$ 
such that $n_\p\in \partial\po(\nabla d^E_\p)$ a.e. in $\{|d^E_\p|<R\}$, and ${\rm div}n_\p\in L^\infty_{\rm loc}(\{|d^E_\p|<R\})$.
\end{corollary}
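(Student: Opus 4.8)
The plan is to deduce the corollary directly from Lemma~\ref{lemmapproxcrystal} by approximation, reducing the construction of $n_\p$ to the smooth elliptic case, where the Cahn--Hoffmann field is single-valued and explicit, and then passing to the limit. I first record the pointwise characterization I will target: since $\po$ is one-homogeneous and is the support function of $\Wp=\{\p\le 1\}$, a vector $z$ belongs to $\partial\po(\nu)$ if and only if $\p(z)\le 1$ and $z\cdot\nu=\po(\nu)$ (test the subdifferential inequality with $w=0$ and $w=2\nu$ to get $z\cdot\nu=\po(\nu)$, and with arbitrary $w$ to get $z\in\Wp$). Moreover $\po(\nabla d^E_\p)=1$ a.e.\ on $\{|d^E_\p|<R\}$, as $d^E_\p$ is the $\p$-distance function. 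Hence it suffices to produce $n_\p\in L^\infty(\{|d^E_\p|<R\};\R^2)$ with $\p(n_\p)\le 1$ a.e., $n_\p\cdot\nabla d^E_\p=1$ a.e., and $\Div n_\p\in L^\infty_{\rm loc}$.

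Fix smooth elliptic anisotropies $\p_\e\to\p$ with $\p_\e\ge\p$ (such approximations exist by a standard mollification of the Wulff shape) and let $E_\e$ be the sets produced by Lemma~\ref{lemmapproxcrystal}, so that $E_\e$ satisfies the $R\Wpe$-condition and $\partial E_\e\to\partial E$ in the Hausdorff sense. Since $\p_\e$ is smooth and elliptic, Remark~\ref{remando} gives that $d^{E_\e}_{\p_\e}$ is $C^{1,1}$ on $\{|d^{E_\e}_{\p_\e}|<R\}$, so the field $n_{\p_\e}:=\nabla\p_\e^\circ(\nabla d^{E_\e}_{\p_\e})$ is well defined and Lipschitz there. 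By Euler's identity together with the eikonal equation $\p_\e^\circ(\nabla d^{E_\e}_{\p_\e})=1$, one has $n_{\p_\e}\cdot\nabla d^{E_\e}_{\p_\e}=\p_\e^\circ(\nabla d^{E_\e}_{\p_\e})=1$; moreover $\p_\e(n_{\p_\e})\le 1$, which with $\p_\e\ge\p\ge c_0|\cdot|$ yields the uniform bound $|n_{\p_\e}|\le 1/c_0$. Along the level set $\{d^{E_\e}_{\p_\e}=s\}$ we have $\Div n_{\p_\e}=\kappa_{\p_\e}$, bounded in modulus by $1/(R-|s|)$ because $\{d^{E_\e}_{\p_\e}\le s\}$ inherits a Wulff condition of radius $R-|s|$; using $d^{E_\e}_{\p_\e}\to d^E_\p$ locally uniformly, this gives $\|\Div n_{\p_\e}\|_{L^\infty(K)}\le C(K)$ uniformly in $\e$ on every compact $K\subset\{|d^E_\p|<R\}$. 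Extracting a subsequence, $n_{\p_\e}\wsto n_\p$ in $L^\infty$ and $\Div n_{\p_\e}\wsto\Div n_\p$ in $\mathcal D'$, whence $\Div n_\p\in L^\infty_{\rm loc}(\{|d^E_\p|<R\})$.

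It remains to identify the weak-$*$ limit as a selection of $\partial\po(\nabla d^E_\p)$, and this is the main obstacle, since it amounts to passing to the limit in a nonlinear subdifferential relation. The constraint is easy: $n_{\p_\e}\in\Wpe\subseteq\Wp$ a.e.\ (because $\p_\e\ge\p$), and since $\{f:\ f\in\Wp\ {\rm a.e.}\}$ is convex and strongly closed, the weak-$*$ limit still satisfies $\p(n_\p)\le 1$ a.e. The delicate point is the product $n_{\p_\e}\cdot\nabla d^{E_\e}_{\p_\e}$ of two merely weakly-$*$ convergent sequences. Here I would invoke the div--curl lemma: $\Div n_{\p_\e}$ is bounded in $L^\infty_{\rm loc}$, hence precompact in $H^{-1}_{\rm loc}$, while $\nabla d^{E_\e}_{\p_\e}$ is curl-free and, being uniformly bounded with $d^{E_\e}_{\p_\e}\to d^E_\p$ locally uniformly, converges weakly-$*$ to $\nabla d^E_\p$. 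Compensated compactness then gives $n_{\p_\e}\cdot\nabla d^{E_\e}_{\p_\e}\to n_\p\cdot\nabla d^E_\p$ in $\mathcal D'$; since the left-hand side is identically $1$, we obtain $n_\p\cdot\nabla d^E_\p=1=\po(\nabla d^E_\p)$ a.e. Combined with $\p(n_\p)\le 1$, the characterization above yields $n_\p\in\partial\po(\nabla d^E_\p)$ a.e., which completes the proof.
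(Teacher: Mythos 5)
Your proposal is correct and follows essentially the same route as the paper: approximate via Lemma~\ref{lemmapproxcrystal} with smooth elliptic anisotropies $\p_\e\ge\p$, set $n_{\p_\e}=\nabla\po_\e(\nabla d^{E_\e}_{\p_\e})$, use the curvature bound coming from Remark~\ref{remando} and the $R\Wpe$-condition to get the uniform $L^\infty_{\rm loc}$ control of $\Div n_{\p_\e}$, and pass to a weak* limit. The only difference is one of detail: the paper's proof of the corollary simply asserts that any weak* limit satisfies the thesis, whereas you justify the delicate identification $n_\p\in\partial\po(\nabla d^E_\p)$ via the div--curl lemma; this is precisely the step the paper carries out by hand (integration by parts against test functions, exploiting the locally uniform convergence of the distance functions together with the weak* convergence of $z_\e$ and $\Div z_\e$) in the proof of Theorem~\ref{teoexistgen}, so your compensated-compactness argument is a legitimate, equivalent substitute.
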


\begin{proof}
Take a sequence $\p_\e$ of 
smooth and elliptic anisotropies converging to $\p$, with $\p_\e\ge \p$.
By Lemma \ref{lemmapproxcrystal} we can approximate $E$ in the Hausdorff distance 
with sets $E_\e$ satisfyng the $RW_{\p_\e}$-condition. 
In particular, letting $n_{\p_\e}= \nabla\po_\e(\nabla d^{E_\e}_{\p_\e})\in L^\infty(\R^2)$ and recalling Remark \ref{remando}, we have
that ${\rm div}n_{\p_\e}\in L^\infty_{\rm loc}(\{|d^{E_\e}_{\p_\e}|<R\})$.
Therefore, any weak* limit $n_\p$ of $n_{\p_\e}$, as $\e\to 0$,
satisfies the thesis.
\end{proof}

\begin{remark}\rm
Notice that, given an arbitrary anisotropy $\p$, 
it is relatively easy to approximate it with
smooth and elliptic anisotropies $\p_\e$. For instance, one may let
$F_\e := \{\eta_\e*\po\le 1\}\oplus B(0,\e)$, 
with $\eta_r(x):= r^{-d} \eta\left(\frac x r\right)$,
and 
$\p_\e(x):=\sup_{\nu\in F_\e} \nu\cdot x$.
It is easy to check that the anisotropies $\p_\e$ are smooth and elliptic, and
converge locally uniformly to $\p$ as $\e\to 0$.
\end{remark}
\section{The time-discrete implicit scheme}\label{secATW}

The results of this section hold in any dimension $d\ge 2$
and are stated in this general form. Up to minor improvements,
they are essentially  stated in~\cite{mcmf,BCCN}.
Following~\cite{mcmf} we recall the definition and some properties of the implicit scheme introduced
in \cite{ATW,LuckhausSturz}. 
Given a set $E\subset \R^d$ with
compact boundary (we assume without loss of generality that
it is bounded), we define for $s>t\ge 0$ a transformation
$T_{t,s}$ by letting $T_{t, s}(E)=\{x\in B_R\,:\, w(x)<0\}$,
where $B_R=B(0,R)$, $R$ is large and $w$ is the minimizer
of 
\begin{equation}\label{eq:mainvp}
\min_{w\in L^2(B_R)} \int_{B_R}\po(D w)\ +\ \frac{1}{2(s-t)}
\int_{B_R} \Big(w(x)-\dpE(x)-G(s,x)+G(t,x)\Big)^2\,dx\,,
\end{equation}
whose existence and uniqueness is shown by standard methods.
One checks easily~\cite{AC-MCM,mcmf,ACN}
that for $R$ large, the level set $T_{t,s}(E)$ of $w$ does not depend on $R$,
and it is a solution to the variational problem
\begin{equation}\label{eq:mainvpset}
\min \Pp(F)
\,+\, \frac{1}{s-t}\int_{F} \left(\dpE(x)+G(s,x)-G(t,x)\right)dx\,,
\end{equation}
where the minimum is taken among the subsets $F$ of $\R^d$ with
finite perimeter, and we set
\[
\Pp(F):=\int_{\partial^* F} \po(\nu_F(x))d\HH^{1}(x).
\]
It follows that the set $T_{t,s}(E)$ has boundary of class 
$C^{1,\alpha}$, outside a compact singular set of
zero $\HH^{1}$-dimension~\cite{ATW}
(when $d=2$, the set $T_{t, s}(E)$ has boundary of class $C^{1,1}$).
The variational problem above is the generalization of the approach
proposed in~\cite{ATW,LuckhausSturz}, for building mean curvature flows
without driving terms, through an implicit time discretization.

For $s=t+h$, the Euler-Lagrange equation for $T_{t,t+h}(E)$ 
at a point $x\in \partial T_{t,t+h}(E)$ formally reads as
\[
\dpE(x) \,=\, -h\left(\kappa_\p(x)+\frac{G(t+h,x)-G(t,x)}{h}\right),
\]
with $\kappa_\p$ being 
the $\p$-curvature at $x$ of $\partial T_{t,t+h}(E)$, so
that it corresponds to an implicit time-discretization of~\eqref{formalevol}.
Observe also that this approximation is trivially
monotone: indeed if
$E\subseteq E'$ then $\dpE\geq d_\p^{E'}$, which yields $w\geq w'$,
$w$ and $w'$ being the solutions of~\eqref{eq:mainvp} for
the distance functions $\dpE$ and $d_\p^{E'}$ respectively.
We deduce that $\{w< 0\}\subseteq\{w'< 0\}$, that is,
\begin{equation}\label{inclusion}
E\subseteq E'
\ \Longrightarrow\ 
T_{t,t+h}(E)\subseteq T_{t,t+h} (E').
\end{equation}
\medskip

Consider now the Euler-Lagrange equation for~\eqref{eq:mainvp},
which is
\begin{equation}\label{ELmain}
-(s-t)\Div z \,+\,
w(x)\ =\ \dpE(x)+G(s,x)-G(t,x)
\end{equation}
for $x\in B_R$, with $\p(z(x))\le 1$ and
$z(x)\cdot \nabla w(x)=\po(\nabla w(x))$ a.e.~in $B_R$
(by elliptic regularity one knows that $w$ is Lipschitz).

We show that if $E$ is regular enough, then we have an estimate
on the quantity $\Div z + (G(s,x)-G(t,x))/(s-t)$ near the boundary of $E$.
The technique is adapted from~\cite{BCCN}.

\begin{lemma}\label{lemballsbis}
Assume that $E$ is a bounded set which satisfies the $\delta \mathcal{W}_\p$-condition
for some $\delta>0$.
Let $a<b$ be such that $X_{a,b} : = \{\max\{w,\dpE\}\geq a\}
\cap \{\min\{w,\dpE\}\leq b\} \subseteq \{\vert \dpE\vert <\delta\}$. Then
$\Div z \in L^\infty(X_{a,b})$ and
\begin{equation}\label{estima11}
\left\Vert \Div z + \frac{G(s,\cdot)-G(t,\cdot)}{s-t} \right\Vert_{L^\infty(X_{a,b})}
\ \leq\ 
\left\Vert \Div \npE  + \frac{G(s,\cdot)-G(t,\cdot)}{s-t}\right\Vert_{L^\infty(X_{a,b})}.
\end{equation}
\end{lemma}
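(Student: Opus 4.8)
The plan is to use the Euler--Lagrange equation~\eqref{ELmain} to turn~\eqref{estima11} into a two-sided pointwise bound on $w-\dpE$, and then to obtain that bound by comparing $w$ with two barriers built from the distance function. Directly from~\eqref{ELmain}, and since $z\in\partial\po(\nabla w)$, one has a.e.\ in $B_R$
\[
\Div z + \frac{G(s,\cdot)-G(t,\cdot)}{s-t} \;=\; \frac{w-\dpE}{s-t}\,.
\]
Writing $\Lambda$ for the right-hand side of~\eqref{estima11}, which is finite because the $\delta\Wp$-condition forces $\Div\npE$ to be bounded on $X_{a,b}\subseteq\{|\dpE|<\delta\}$ (the $\p$-curvature of the $\delta$-Wulff-regular level sets of $\dpE$ being controlled, cf.\ the Corollary above) and $(G(s,\cdot)-G(t,\cdot))/(s-t)$ is bounded, the claim~\eqref{estima11} is exactly the estimate $|w-\dpE|\le\Lambda(s-t)$ a.e.\ on $X_{a,b}$; the membership $\Div z\in L^\infty(X_{a,b})$ then follows from the same identity.

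For the upper bound I would use $\psi:=\dpE+\Lambda(s-t)$ as a supersolution of~\eqref{ELmain}. Adding a constant leaves the gradient unchanged, so $\npE\in\partial\po(\nabla\psi)$, and by the definition of $\Lambda$ one has a.e.\ on $X_{a,b}$
\[
-(s-t)\Div\npE + \psi \;=\; \dpE+\Lambda(s-t)-(s-t)\Div\npE \;\ge\; \dpE + G(s,\cdot)-G(t,\cdot)\,,
\]
which is precisely the supersolution inequality; symmetrically, $\dpE-\Lambda(s-t)$ is a subsolution. To compare $w$ with $\psi$ I would test the difference of the identity for $w$ and the supersolution inequality against $\theta:=(w-\psi)^+\ge0$: subtracting gives $(w-\psi)-(s-t)\Div(z-\npE)\le0$, and after multiplying by $\theta$, integrating and integrating by parts,
\[
\int \theta^2 \,+\, (s-t)\int (\nabla w-\nabla\psi)\cdot(z-\npE)\,dx \;\le\; 0\,.
\]
Since $z\in\partial\po(\nabla w)$ and $\npE\in\partial\po(\nabla\psi)$, monotonicity of the subdifferential $\partial\po$ makes the second integrand nonnegative, forcing $\theta\equiv0$, i.e.\ $w\le\psi$; the lower bound is obtained identically with $\theta=(\dpE-\Lambda(s-t)-w)^+$.

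The delicate point — and the reason the statement is localized to $X_{a,b}$ — is that $\psi$ is a supersolution only where $\Div\npE$ is defined and bounded, namely on $\{|\dpE|<\delta\}$, and $\Lambda$ bounds the datum only there. The role of the set $X_{a,b}$ is precisely to confine the comparison to this good region: by its definition through $\max\{w,\dpE\}\ge a$ and $\min\{w,\dpE\}\le b$, outside $X_{a,b}$ one has either $w,\dpE<a$ or $w,\dpE>b$, and in particular every level set $\{w=c\}$ with $c\in[a,b]$ stays inside $X_{a,b}\subseteq\{|\dpE|<\delta\}$. I would therefore run the truncation argument only on this range of levels — equivalently, truncate $w$ to $[a,b]$ before testing — so that $\theta$ is supported inside $\{|\dpE|<\delta\}$ and the boundary terms produced by the integration by parts drop out. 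Making this localization rigorous, namely checking that cutting at the levels $a$ and $b$ creates no spurious boundary contributions and that the monotonicity estimate survives the restriction to $X_{a,b}$, is where the real work lies; the convexity/monotonicity core of the comparison is otherwise soft.
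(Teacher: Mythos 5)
Your reduction of \eqref{estima11}, via \eqref{ELmain}, to the two-sided bound $|w-\dpE|\le\Lambda(s-t)$ a.e.\ on $X_{a,b}$, and your use of the monotonicity of $\partial\po$ (through the calibration identities $z\cdot\nabla w=\po(\nabla w)$ and $\npE\cdot\nabla\dpE=\po(\nabla\dpE)$), are exactly the core of the paper's argument; the paper runs it with test functions $f(w-\dpE)$, $f$ increasing and vanishing on $(-\infty,0]$, derives $L^q$ bounds and lets $q\to\infty$, while you aim directly at $L^\infty$ with the barrier $\psi=\dpE+\Lambda(s-t)$. The genuine gap is in the localization step, the one you yourself flag as ``where the real work lies'', and the fix you sketch does not work. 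Since the supersolution inequality for $\psi$ holds only a.e.\ on $X_{a,b}$, the tested inequality must be integrated over $X_{a,b}$, and integration by parts produces the boundary term $(s-t)\int_{\partial X_{a,b}}\theta\,(z-\npE)\cdot\nu\,d\HH^{d-1}$. This term does not ``drop out'': the set $\{w>\psi\}$ is in general \emph{not} contained in $X_{a,b}$. For instance, for $E$ a large Wulff shape $R_0\Wp$ and $G=0$, one has $w-\dpE=(s-t)\Div z\approx (s-t)/\p(x)$ near the center, which exceeds $\Lambda(s-t)$ well inside the region where both $w<a$ and $\dpE<a$. Your truncation idea fails for the same reason: cutting $w$ at the levels $a,b$ controls level sets, not the test function's support — on the component of the complement of $X_{a,b}$ where $w<a$ and $\dpE<a$, the truncated test function equals $(a-\dpE-\Lambda(s-t))^+$, which is positive wherever $\dpE<a-\Lambda(s-t)$, so its support still escapes $X_{a,b}$ and $\{|\dpE|<\delta\}$. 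Note also that an off-the-shelf comparison principle is unavailable here because boundary ordering $w\le\psi$ on $\partial X_{a,b}$ fails in general.

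What is actually needed — and what the paper's proof supplies — is not that the boundary terms vanish, but that they have a favorable \emph{sign}. Where the test function is positive one has $w>\dpE$; hence on $\{\min\{w,\dpE\}=b\}$ the relevant piece of $\partial X_{a,b}$ is a level set of $\dpE$ with outer normal $\nabla\dpE/|\nabla\dpE|$, and $(z-\npE)\cdot\nabla\dpE\le\po(\nabla\dpE)\bigl(\p(z)-1\bigr)\le 0$; on $\{\max\{w,\dpE\}=a\}$ it is a level set of $w$ with outer normal $-\nabla w/|\nabla w|$, and $-(z-\npE)\cdot\nabla w=-\bigl(\po(\nabla w)-\npE\cdot\nabla w\bigr)\le 0$. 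With this sign analysis inserted in place of your truncation, your barrier argument closes and becomes essentially the paper's proof (with $f(r)=(r-\Lambda(s-t))^+$ in place of a general $f$); without it, the comparison $w\le\psi$ is unjustified.
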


\begin{proof}
Let $f:\R\to [0,+\infty)$ be a smooth increasing function
with $f(t)=0$ if $t\le 0$. Since
$(w,z)$ solves \eqref{ELmain}, we find
\begin{multline*}
  \int_{X_{a,b}} (w - \dpE)  f(w-\dpE)\,dx \ =\ 
\int_{X_{a,b}}\left((s-t) \Div z +  G(s,x)-G(t,x)\right)
 f(w-\dpE) \,dx
\\
 =\  (s-t)\int_{X_{a,b}} (\Div z - \Div \npE)  f(w-\dpE)\,dx \\+\,
\int_{X_{a,b}} \left((s-t)\Div \npE +  G(s,x)-G(t,x)\right) f(w-\dpE)\,dx
\ =:\  {\rm I} + {\rm II}.
\end{multline*}
We have, observing that $X_{a,b}$ has Lipschitz boundary (for a.e. choice
of $a,b$),
\begin{eqnarray*}
{\rm I} = & -& (s-t)\int_{X_{a,b}} (z - \npE) \cdot \nabla (w-\dpE)
f'(w-\dpE)\,dx
\\
& + & (s-t)\int_{\partial X_{a,b}} f(w-\dpE)(z - \npE)\cdot
\nu^{X_{a,b}} \,d \HH^{1} =: {\rm I}_1 + {\rm I}_2.
\end{eqnarray*}
First ${\rm I}_1\le 0$ since $z\cdot\nabla w=\po(\nabla w)$
and $z\cdot \nabla\dpE=\po(\nabla \dpE)$.
We claim that also ${\rm I}_2 \leq 0$. 
Indeed, on one hand, when $f(w-\dpE)>0$, $w>\dpE$ hence,
$\nu^{X_{a,b}} = \nu^{\{\dpE\leq b\}}=\nabla \dpE/|\nabla \dpE|$,
$\HH^{1}$-almost everywhere on $\{\min\{w,\dpE\} =  b\}$ while
$\nu^{X_{a,b}} = \nu^{\{w\geq a\}}=-\nabla w/|\nabla w|$,
$\HH^{1}$-almost everywhere on $\{\max\{w,\dpE\} = a\}$.
It follows that $f(w-\dpE)(z-\npE)\cdot\nu^{X_{a,b}}\le 0$
on both $\{\min\{w,\dpE\} =  b\}$ and $\{\max\{w,\dpE\} = a\}$
so that ${\rm I}_2\le 0$.
We conclude that ${\rm I} \leq 0$, hence
\begin{equation}\label{estima1}
\int_{X_{a,b}} (w - \dpE)f(w-\dpE)\,dx \ \le\   \int_{X_{a,b}} 
\left((s-t) \Div \npE +  G(s,x)-G(t,x)\right)\, f(w-\dpE)\,dx.
\end{equation}
Let $q>2$, let $r^+ := r \vee 0$, and let $\{f_n\}$ be a sequence
of smooth increasing nonnegative functions such that $f_n(r) \to
{r^+}^{(q-1)}$ uniformly as $n\to\infty$. {}From \eqref{estima1}
we obtain
\begin{multline*}
\int_{X_{a,b}} ((w - \dpE)^{+})^q \,dx \ \le\  \int_{X_{a,b}}
\left((s-t) \Div \npE +  G(s,x)-G(t,x)\right)\, ((w - \dpE)^{+})^{q-1} \,dx
\\ \le\  \int_{X_{a,b}}
\left((s-t) \Div \npE +  G(s,x)-G(t,x)\right)^+\, ((w - \dpE)^{+})^{q-1} \,dx.
\end{multline*}
Applying Young's inequality we obtain
$$
\Vert (w - \dpE)^{+} \Vert_{L^q(X_{a,b})}\ \leq\  
\Big\| \left((s-t) \Div \npE +
G(s,\cdot)-G(t,\cdot)\right)^+ \Big\|_{L^q(X_{a,b}\cap \{w>\dpE\})}\,.
$$
A similar proof, reverting the signs, shows that
$$
\Vert (w - \dpE)^{-} \Vert_{L^q(X_{a,b})} \ \leq\  
\Big\| \left((s-t) \Div \npE +
G(s,\cdot)-G(t,\cdot)\right)^- \Big\|_{L^q(X_{a,b}\cap \{w<\dpE\})}
$$
It follows that
$$
\Vert (s-t)\Div z+  G(s,\cdot)-G(t,\cdot) \Vert_{L^q(X_{a,b})}\ \leq
\Vert(s-t) \Div \npE +  G(s,\cdot)-G(t,\cdot) \Vert_{L^q(X_{a,b})}\,,
$$
and letting $q\to\infty$ we obtain \eqref{estima11}.
Observe that the estimate we may obtain is a bit more precise,
in fact we have shown:
\begin{multline}\label{estima11bis}
\essinf_{X_{a,b}\cap \{w<\dpE\}}
\Div \npE + \frac{G(s,\cdot)-G(t,\cdot)}{s-t}
\\ \le\ 
\Div z(x)  + \frac{G(s,x)-G(t,x)}{s-t}
\ \le\ 
\esssup_{X_{a,b}\cap \{w>\dpE\}}
\Div \npE + \frac{G(s,\cdot)-G(t,\cdot)}{s-t}
\end{multline}
for a.e. $x\in X_{a,b}$.
\end{proof}

We also recall Lemma~3.2 from~\cite{mcmf}:
\begin{lemma}\label{lemwulffh}
Let $x_0\in B_R$ and $\rho>0$, and let $t\geq 0$.
Let $\tilde w$ solve
\begin{equation}\label{vpwulff}
\min_{\tilde w\in L^2(B_R)} \int_{B_R}\po(D \tilde w)\ +\ \frac{1}{2h}
\int_{B_R} (\tilde w(x)-(\p(x-x_0)-\rho)-G(x,t+h)+G(x,t))^2\,dx\,.
\end{equation}
Then
\begin{equation}\label{superwulff}
\tilde w(x)\ \leq\ \begin{cases}
\displaystyle \p(x-x_0) + h\frac{1}{\p(x-x_0)} + \Delta_h(t) -\rho
& {\rm if}\ \p(x-x_0)\geq \sqrt{2h} \\
\displaystyle 2\sqrt{2h} + \Delta_h(t) -\rho & {\rm otherwise,}
\end{cases}
\end{equation}
where $\Delta_h(t):=\|G(\cdot,t+h)-G(\cdot,t)\|_{L^\infty(B_R)}$.
\end{lemma}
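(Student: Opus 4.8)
The plan is to bound $\tilde w$ by comparison with an explicit radial supersolution built from the Wulff cone $d(x):=\p(x-x_0)$. First I would reduce to a pure cone datum. Since $G(x,t+h)-G(x,t)\le\Delta_h(t)$ pointwise, the datum of \eqref{vpwulff} is bounded above by $\bar g(x):=\p(x-x_0)-\rho+\Delta_h(t)$; by the monotonicity of the variational problem with respect to its datum (the functional version of the observation made just before \eqref{inclusion}, namely a larger datum yields a larger minimizer) we get $\tilde w\le\bar w$, where $\bar w$ solves \eqref{vpwulff} with datum $\bar g$. As adding a constant to the datum merely adds the same constant to the minimizer, $\bar w=v-\rho+\Delta_h(t)$, with $v$ the minimizer associated to the datum $d$. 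It therefore suffices to show $v(x)\le \p(x-x_0)+h/\p(x-x_0)$ when $\p(x-x_0)\ge\sqrt{2h}$, and $v(x)\le 2\sqrt{2h}$ otherwise.

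The key computation is that the cone is an exact solution away from its tip. Using the duality identities $\po(\nabla\p)\equiv 1$ and $\nabla\po(\nabla\p(y))=y/\p(y)$, together with Euler's relation $y\cdot\nabla\p(y)=\p(y)$, the Cahn--Hoffmann field of $d$ is $n_\p=(x-x_0)/\p(x-x_0)$ and, in the plane, $\Div n_\p=1/\p(x-x_0)$. Consequently, on $\{d>\sqrt h\}$ the gradient of $d+h/d$ stays a positive multiple of $\nabla d$, so it shares the same field $n_\p$, and $(d+h/d)-h\,\Div n_\p=d$: the profile $d+h/d$ solves the Euler--Lagrange equation \eqref{ELmain} (with $G\equiv 0$, $s-t=h$) exactly there. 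This already produces the claimed bound in the outer region $\{d\ge\sqrt{2h}\}$.

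It remains to cap this profile before the tip, where $1/d$ blows up. I would set $V:=d+h/d$ on $\{d\ge\sqrt{2h}\}$ and $V:=2\sqrt{2h}$ on the inner ball $\{d<\sqrt{2h}\}$, and pair it with the field $z$ equal to $n_\p$ outside and to the linear field $(x-x_0)/\sqrt{2h}$ inside. The thresholds are tuned precisely so that $z$ is continuous across $\{d=\sqrt{2h}\}$ --- there $n_\p=(x-x_0)/\sqrt{2h}$ --- hence $\Div z$ has no singular part on the interface; moreover $\p\big((x-x_0)/\sqrt{2h}\big)=d/\sqrt{2h}\le 1$ on the inner ball, so $\p(z)\le 1$ throughout, and $\Div z=2/\sqrt{2h}$ gives $V-h\,\Div z=\sqrt{2h}\ge d$ on the whole inner ball. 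Thus $(V,z)$ is a supersolution of \eqref{ELmain}, the comparison principle yields $v\le V$, and undoing the reduction gives \eqref{superwulff}.

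The main obstacle is exactly this matching across $\{d=\sqrt{2h}\}$: the capped profile $V$ is discontinuous (it jumps upward toward the tip), so $z\notin\partial\po(\nabla V)$ on the interface and one cannot appeal to a naive pointwise maximum principle. The point is that the field $z$ is continuous, so the one-sided inequalities $z\cdot\nabla v=\po(\nabla v)$ and $z\cdot\nabla V\le\po(\nabla V)$ --- used exactly as in the estimate of $\mathrm{I}_1$ in the proof of Lemma~\ref{lemballsbis} --- still close up when one integrates the inequality $(v-V)-h\,\Div(z_v-z)\le 0$ against a truncation $f((v-V)^+)$; the absence of a singular part in $\Div z$ is what renders the jump of $V$ harmless. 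Equivalently, one may argue on the sublevel sets $\{V<\mu\}$, which are subsolutions of the geometric problem \eqref{eq:mainvpset}, and conclude by the submodularity of $\Pp$, a route that is manifestly insensitive to the jump of $V$.
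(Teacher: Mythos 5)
The paper never proves this lemma --- it is quoted verbatim from \cite{mcmf} --- so your attempt can only be judged on its own terms. Your overall strategy (reduce to the cone datum $d(x):=\p(x-x_0)$ by monotonicity in the datum and by shifting constants, then compare with an explicit $\p$-radial profile) is the natural one and is in the spirit of the cited reference; the reductions are fine, and so is the computation $\Div\frac{x-x_0}{\p(x-x_0)}=\frac{1}{\p(x-x_0)}$ showing that $d+h/d$ solves \eqref{ELmain} exactly on $\{d>\sqrt h\}$, as well as the pointwise supersolution property of your capped pair $(V,z)$ away from the interface. The proof breaks exactly at the point you flag, and neither of your two proposed repairs works. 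Write $v$ for the minimizer with datum $d$, $z_v$ for its field, $u=(v-V)^+$, and $\Sigma=\partial(x_0+\sqrt{2h}\Wp)$. From $v-V\le h(\Div z_v-\Div z)$ one gets, pairing with $u$ and integrating by parts,
\[
\int_{B_R}(v-V)\,u\,dx\ \le\ -h\int_{B_R}(z_v-z,Du)\ +\ h\int_{\partial B_R}u\,(z_v-z)\cdot\nu\,d\HH^1.
\]
The boundary term is harmless ($z_v\cdot\nu=0$, and your field points outward at $\partial B_R$), and the absolutely continuous part of the pairing is $\ge0$ exactly as in Lemma~\ref{lemballsbis}. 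But $Du$ also has a jump part on $\Sigma$: since $V$ jumps \emph{up} toward the tip (from $\frac32\sqrt{2h}$ to $2\sqrt{2h}$), the outer trace of $u$ exceeds the inner one, and this jump pairs with the normal trace of $z_v-z$. Continuity of your $z$ forces $z\cdot\nu=\po(\nu)$ on $\Sigma$ (the radial field has \emph{maximal} normal trace on Wulff boundaries), while $z_v\cdot\nu\le\p(z_v)\po(\nu)\le\po(\nu)$; hence the jump contribution to $(z_v-z,Du)$ is $\le0$, i.e.\ it enters the final inequality with the wrong sign and cannot be discarded. The feature you invoke --- absence of a singular part in $\Div z$ --- is beside the point: the harmful singular measure is the jump part of $Du$, not of $\Div z$. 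The level-set route fails as well: for $\frac32\sqrt{2h}<\mu<2\sqrt{2h}$ the set $\{V<\mu\}$ is the \emph{annulus} $\{\sqrt{2h}\le d<s_+(\mu)\}$, which is not a subsolution of \eqref{eq:mainvpset} (the empty set has strictly smaller energy), so submodularity gives nothing.

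This gap cannot be patched, because the inequality you are trying to prove is false at the printed threshold. Take $G\equiv0$, $\rho=0$, and $x_0$ well inside $B_R$. For a.e.\ $\mu$ the level set $\{v<\mu\}$ minimizes $F\mapsto\Pp(F)+\frac1h\int_F(d-\mu)\,dx$; by the anisotropic isoperimetric inequality and the bathtub principle the minimizer is a Wulff shape $x_0+s\Wp$ (or empty), and minimizing $s\mapsto 2s|\Wp|+\frac1h\bigl(\frac23 s^3-\mu s^2\bigr)|\Wp|$ over $s\ge0$ yields
\[
v\,=\,d+\frac hd\quad\text{on }\{d\ge\sqrt{3h}\},
\qquad
v\,\equiv\,\tfrac43\sqrt{3h}\quad\text{on }\{d\le\sqrt{3h}\}.
\]
In particular $v\ge\frac43\sqrt{3h}\approx 2.31\sqrt h$ everywhere, whereas \eqref{superwulff} claims $v\le\frac32\sqrt{2h}\approx2.12\sqrt h$ at $d=\sqrt{2h}$: the bound fails on the whole annulus $\sqrt{2h}\le d<\sqrt{3h}$. (The printed constants are the $N=2$ specialization of the $N$-dimensional ones of \cite{mcmf}, namely $d+(N-1)h/d$ and $\sqrt{2(N-1)h}$, which are mutually consistent only when $2(N-1)\ge N+1$, i.e.\ $N\ge3$.) Your construction does work once the constants are corrected: cap at $\sqrt{3h}$ with value $\frac43\sqrt{3h}$; the profile is then continuous (it is the exact solution), an admissible inner field is $z=\frac{x-x_0}{h}\bigl(\frac23\sqrt{3h}-\frac d3\bigr)$, which matches the outer radial field on $\{d=\sqrt{3h}\}$ with $\p(z)\le1$, and your comparison argument closes with no jump term at all. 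This correction is harmless for the only use made of the lemma in the paper, namely \eqref{estimwd}, where only points with $d\ge\rho/2\ge 2\sqrt h>\sqrt{3h}$ are involved.
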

We deduce an estimate on $w-\dpE$, if $E$ has an inner $\rho\Wp$-condition:
indeed, in this case, if $h=s-t$,
\[
\dpE(x)\ \le\ \inf\left\{
\p(x-x_0)-\rho\,:\, \dpE(x_0)=-\rho \right\}
\]
with, in fact, equality in $\{ -\rho\le \dpE\le \rho'\}$, where $\rho'\ge 0$
is the radius of an outer $\rho'\Wp$-condition.
It follows from~\eqref{superwulff} that
\begin{equation}
w(x)\ \le\ \inf\left\{
\p(x-x_0) -\rho + h\frac{1}{\p(x-x_0)} + \Delta_h(t) 
:\, \dpE(x_0)=-\rho \right\}
\end{equation}
for $x$ with $\dpE(x)\ge -\rho+\sqrt{2h}$, and more precisely
if $\rho'\ge \dpE(x)\ge -\rho/2$,
\begin{equation}\label{estimwd}
w(x)\ \le\ \dpE(x) + \frac{2h}\rho + \Delta_h(t),
\end{equation}
as soon as $h\le \rho^2 / 16$.

\section{Smooth anisotropies}\label{secsmooth}

\subsection{Existence of $\p$-regular flows.}

We will prove, in dimension $d=2$, an existence result for the forced
curvature flow, first in case the anisotropy is smooth and elliptic.
For technical reason, we need the forcing term $G$ to be either
time-dependent only (case $G_2=0$), or smooth (globally Lipschitz
in space and time, case $G_1=0$).

\begin{theorem}\label{thexistsmooth}
Assume $G_1=0$ or $G_2=0$, and let $(\p,\po)$ be a smooth and elliptic
anisotropy and $E_0\subset \R^2$ an initial set
with compact boundary, satisfying both an $R\Wp$-internal and
external condition. Then, there exist $T>0$, 
and a $\p$-regular flow $E(t)$ defined on $[0,T]$ and starting
from $E(0)=E_0$.

More precisely, 
there exist $R'>0$ and a neighborhood $U$
of $\bigcup_{0\le t\le T} \partial E(t)$ in $\R^2$
such that the sets $E(t)$ satisfy the $R'W_\p$-condition for all $t\in [0,T]$,
the $\p$-signed distance function
$\dpE(t,x)$ from $\partial E(t)$ belongs to $C^0([0,T];\Lip(U))\cap L^\infty([0,T];C^{1,1}(U))$,
$(\dpE-G)\in \Lip([0,T]\times U)$ and  
\begin{equation}\label{smoothevol}
\left|\frac{\partial (\dpE-G)}{\partial t}(t,x)
-\Div\nabla \po(\nabla \dpE)(t,x)\right|\ \le\ \lambda|\dpE(t,x)|\,.
\end{equation}
for a.e.~$(t,x)\in [0,T]\times U$, where $\lambda$ is a positive constant.
Finally, the time $T$, the radius $R'$, the set $U$, 
and the constant $\lambda$ depend only on $R$ and $G$.
\end{theorem}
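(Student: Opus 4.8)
The plan is to construct the flow by the implicit minimizing-movements scheme of Section \ref{secATW} and then pass to the limit, the entire difficulty being to propagate the $R\Wp$-condition uniformly in the time step. Fix $h>0$, set $t_k:=kh$, and define $E^h_0:=E_0$ and $E^h_{k+1}:=T_{t_k,t_{k+1}}(E^h_k)$. Let $w_k,z_k$ be the minimizer and vector field solving \eqref{ELmain} at the $k$-th step, and introduce the piecewise-constant interpolations $d_h(t,\cdot):=d_\p^{E^h_k}$, $z_h(t,\cdot):=z_k$ for $t\in[t_k,t_{k+1})$. The goal is to show that, on a time interval $[0,T]$ with $T$ depending only on $R$ and $G$, every $E^h_k$ satisfies an $R'\Wp$-condition with $R'>0$ uniform in $h$ and $k$, and then to extract a converging subsequence whose limit is a $\p$-regular flow.

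The heart of the matter is this uniform regularity propagation. Assuming $E^h_k$ satisfies an inner $\rho\Wp$- and outer $\rho'\Wp$-condition, I would apply the barrier Lemma \ref{lemwulffh}, and the resulting estimate \eqref{estimwd}, to each inner Wulff ball $y+\rho\Wp\subseteq\overline{E^h_k}$, together with the symmetric estimate for the complement. This shows that in one step the inner and outer radii decrease only by a curvature contribution of order $h/\rho$. In the case $G=G_1$ the forcing is spatially uniform, so the barrier Wulff shapes translate \emph{rigidly} and the (possibly very rough in time) increments $\Delta_h(t_k)$ merely shift the configuration without touching the radii; in the case $G=G_2$ the bounds on $\nabla G_2$ and $\partial_t G_2$ contribute a bounded additional rate. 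In both cases the preserved radius $\rho_k$ satisfies a discrete inequality of the form $\rho_{k+1}\ge \rho_k - Ch/\rho_k - Ch$, i.e.\ the differential inequality $\dot\rho\ge -C/\rho - C$, which keeps $\rho$ bounded below by some $R'>0$ for all $t\le T$, with $T,R'$ depending only on $R$ and $G$. The connectedness clause of the $R\Wp$-condition is automatic here by Remark \ref{remando}, since $\p$ is smooth.

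For compactness and identification, the uniform $R'\Wp$-condition gives (Remark \ref{remando}) a uniform $C^{1,1}$ bound on $\partial E^h_k$ with $|\kappa_\p|\le 1/R'$, hence $d_h(t,\cdot)$ is equi-Lipschitz in $x$ with uniformly bounded $C^{1,1}$ norm on a fixed neighborhood $U$ of $\bigcup_t\partial E(t)$; Lemma \ref{lemballsbis}, via \eqref{estima11}, controls $\|\Div z_h\|_{L^\infty}$ in terms of $\|\Div\npE\|\le 1/R'$ and the forcing. Telescoping \eqref{ELmain} over the steps shows that $d_h-G$ is equi-Lipschitz in time. I would then apply Arzel\`a--Ascoli to $d_h$ and to $\nabla d_h$ (using the $C^{1,1}$ bound), and weak-$*$ compactness to $z_h$ and $\Div z_h$, obtaining a limit $d\in C^0([0,T];\Lip(U))\cap L^\infty([0,T];C^{1,1}(U))$ and $z\in L^\infty$ with $\Div z\in L^\infty$. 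Since $\p$ is smooth and elliptic, $\partial\po$ is single-valued and $z=\nabla\po(\nabla d)$; the strong convergence of $\nabla d_h$ lets me pass this constitutive relation to the limit, while passing the telescoped Euler--Lagrange identity to the limit yields \eqref{smoothy}, hence \eqref{smoothevol}, the factor $\lambda|\dpE|$ arising from the distance-dependent degradation of the curvature estimate.

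The main obstacle is the uniform propagation of regularity described above: one must verify that \eqref{estimwd} genuinely controls both Wulff radii from one step to the next without an error that accumulates over the $O(1/h)$ steps, and, in particular, that the borderline rough-in-time case $G=G_1$ is handled by rigid translation of the barriers, so that $T$ and $R'$ depend only on $R$ and $G$ and not on any modulus of continuity of $G_1$. A secondary difficulty is the identification step, where passing $z=\nabla\po(\nabla d)$ to the limit relies crucially on the strong convergence of $\nabla d_h$ furnished by the uniform $C^{1,1}$ bound.
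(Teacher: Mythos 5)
Your overall architecture (implicit scheme, uniform propagation of the $R\Wp$-condition on a time interval depending only on $R$ and $G$, then compactness and identification of the limit) is the paper's, and your compactness/identification step is essentially right. But the heart of the proof --- the one-step propagation of the Wulff condition --- is not established by your argument. Barrier comparison (Lemma~\ref{lemwulffh}, estimate~\eqref{estimwd}, monotonicity~\eqref{inclusion}) only tells you that shrunken inner Wulff balls of $E^h_k$ stay inside $T_{t_k,t_{k+1}}(E^h_k)$ and that the new boundary lies in an $O(h)$-collar of the old one. It does \emph{not} show that points of the new set lying in that collar are themselves covered by inner Wulff balls of controlled radius (nor the analogous outer statement): the new boundary could a priori develop high-curvature arcs that no ball of radius $\rho_{k+1}$ can touch from inside. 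This is exactly why the paper does not propagate the radius directly by barriers, but instead propagates a \emph{curvature bound} via the Euler--Lagrange estimate of Lemma~\ref{lemballsbis} (inequalities~\eqref{estima11}, \eqref{estima11bis}, leading to~\eqref{totalcontrol}), and then converts the curvature bound plus the sandwiching $E_\delta\subset F\subset E^\delta$ back into a Wulff condition using the dedicated Lemma~\ref{lemmWC} (and Remark~\ref{remarkriro}). In your proposal Lemma~\ref{lemballsbis} appears only in the compactness step and Lemma~\ref{lemmWC} never appears, so your discrete inequality $\rho_{k+1}\ge \rho_k - Ch/\rho_k - Ch$ is asserted, not proved.

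The second gap is the rough case $G=G_2\equiv 0$, $G=G_1$. A spatially uniform normal forcing does \emph{not} translate Wulff barriers rigidly: it dilates or erodes them, so the inner and outer radii are changed by exactly the \emph{signed} increment $G(t_{k+1})-G(t_k)$, which for a Brownian path is of order $\sqrt h$ per step --- neither $O(h)$ nor zero. Summing absolute values of these increments over $O(1/h)$ steps diverges, so your discrete inequality cannot absorb the forcing into a $-Ch$ term. The reason the scheme survives is that the signed radius changes \emph{telescope} to $G(t)-G(0)$: the paper implements this by replacing $E$ with the offset set $E'$ whose distance function is $\dpE + G(s)-G(t)$ and running the $G\equiv 0$ scheme on $E'$, so that after $n$ steps $r_i^n \ge R - (G(nh)-G(0)) - cnh/R$ and $r_o^n \ge R + (G(nh)-G(0)) - cnh/R$. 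In particular your claim that $T$ and $R'$ do not depend on the modulus of continuity of $G_1$ is false: taking $G_1(t)=Mt$ with $M$ large, the inner radius is destroyed in time of order $R/M$, and indeed the paper's existence time is defined precisely by the condition $\max_{0\le t\le T}|G(t)-G(0)|+ct/R\le R/4$, which uses the continuity of $G_1$. Since you flagged both points as ``the main obstacle'' without supplying the mechanism that resolves them, the proposal as it stands has a genuine gap exactly where the theorem is hardest.
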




Theorem~\ref{thexistsmooth} will be proved by time-discretization.
Before, we need the following technical lemma.




\begin{lemma}\label{lemmWC}
Let $\p,\po$ be smooth and elliptic, and a set $E$ satisfy
a $R\,\Wp$-condition for some $R>0$. We also assume that
$E$ is simply connected ($\partial E$ is a $C^{1,1}$ Jordan curve).
Let $\delta\in (0,R)$ and consider a set $F$ (also simply connected),
such that $E_\delta\subset F\subset E^\delta$. Assume that
$\|\kappa_\p^F\|_{L^\infty(\partial F)}\le K$ for a constant $K<1/(2\delta)$.
Then $F$ has a $R'\Wp$-condition, with $R'=\min\{ R-\delta, (1-2\delta K)/K\}$.
\end{lemma}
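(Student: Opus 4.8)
The plan is to reduce the claim to the inner $R'\Wp$-condition alone, and then to establish that condition by a growing inscribed Wulff-ball argument, in which the curvature hypothesis controls the \emph{local} obstructions and the sandwiching controls the \emph{non-local} ones. First I would exploit the $F\leftrightarrow F^c$ symmetry of the hypotheses: complementing $\{\dpE\le-\delta\}\subset F\subset\{\dpE\le\delta\}$ gives $\{d_\p^{E^c}\le-\delta\}\subset F^c\subset\{d_\p^{E^c}\le\delta\}$, while $E^c$ still satisfies the $R\Wp$-condition and $|\kappa_\p^{F^c}|=|\kappa_\p^F|\le K$. Since the outer $R'\Wp$-condition for $F$ is exactly the inner one for $F^c$, it suffices to prove the inner $R'\Wp$-condition for $F$. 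Because $\p$ is smooth and elliptic and $\partial F$ is $C^{1,1}$ with $|\kappa_\p^F|\le K<1/R'$ (here I use $R'\le 1/K-2\delta<1/K$), Remarks~\ref{remando}--\ref{ramengo} reduce this to the inscribed-ball property: for every $x_0\in\partial F$ the Wulff shape of radius $R'$ tangent to $\partial F$ at $x_0$ from inside is contained in $\overline F$.

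To prove the inscribed-ball property I would fix $x_0\in\partial F$, let $\nu_0$ be the outer $\p$-normal and $n_0=\nabla\po(\nu_0)$ the Cahn--Hoffmann vector, and consider the nested increasing family $B_r:=(x_0-r\,n_0)+r\Wp$ of Wulff shapes tangent to $\partial F$ at $x_0$ from inside. Let $r^*\in[0,R']$ be the largest radius with $B_r\subset\overline F$; I claim $r^*=R'$. Otherwise $B_{r^*}$ touches $\partial F$ at a second point $x_1\ne x_0$. If $\kappa_\p^F(x_1)\ge 1/r^*$, then $1/r^*\le K$, i.e. $r^*\ge 1/K>R'$, a contradiction. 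If instead $\kappa_\p^F(x_1)<1/r^*$, the contact cannot come from local bending and must reflect a non-local, opposite-side configuration of $\partial F$; here I would invoke the sandwiching, using $x_0,x_1\in\partial F\subset\{|\dpE|\le\delta\}$, $B_{r^*}\subset\overline F\subset\{\dpE\le\delta\}$ and $F\supset\{\dpE\le-\delta\}$ (which forbids $F$ from being thin on the scale of $E$'s reach), to transfer the two-point tangency to $E$ and apply the inner and outer $R\Wp$-conditions of $E$, obtaining $r^*\ge R-\delta$ and contradicting $r^*<R'\le R-\delta$.

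The main obstacle is precisely this non-local case together with the exact bookkeeping of the $\delta$-losses. The growing ball is tangent to $\partial F$, not to $\partial E$, so the direction $n_0$ differs from the $\p$-normal $\nabla\dpE$ of $E$ at the nearby point of $\partial E$; estimating this discrepancy across the $2\delta$-wide band $\{|\dpE|\le\delta\}$, using the curvature bound of $F$ on one side and the reach $R$ of $E$ on the other, is what converts the naive curvature radius $1/K$ into $1/K-2\delta$ and the ambient reach $R$ into $R-\delta$, thereby producing the value $R'=\min\{R-\delta,(1-2\delta K)/K\}$. Once both cases are excluded for every $r<R'$, the Wulff shape of radius $R'$ lies in $\overline F$ at each $x_0\in\partial F$, which by Remark~\ref{remando} yields the inner $R'\Wp$-condition; the complementary argument from the first paragraph then gives the full $R'\Wp$-condition.
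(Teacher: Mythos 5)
There is a genuine gap, and it sits exactly at the point you yourself flag as ``the main obstacle'': the non-local contact case. Your dichotomy at the second contact point $x_1$ does not work as stated. Since $B_{r^*}\subset \overline F$ is tangent to $\partial F$ at $x_1$ from inside, one automatically has $\kappa_\p^F(x_1)\le 1/r^*$ in the viscosity sense, so your Case A is essentially vacuous: \emph{every} second contact falls into Case B, including contacts where both sheets of $\partial F$ are nearly flat (a pinch between two bumps). For such contacts your proposed resolution --- ``transfer the two-point tangency to $E$ and apply the $R\Wp$-conditions of $E$, obtaining $r^*\ge R-\delta$'' --- has no mechanism behind it and is in fact false as a general principle: $\partial F$ can fold back on itself \emph{within} the annulus $E^\delta\setminus E_\delta$, producing a pinch between two pieces of $\partial F$ lying over the same portion of $\partial E$. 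Such a configuration is invisible to the geometry of $E$ (whose boundary is perfectly regular there), so no $\Wp$-condition of $E$ can exclude it; what excludes it is a quantitative interaction between the curvature bound $K$, the band width $2\delta$, and the topology of $F$, and your proposal never supplies this. Note also that simple connectedness of $F$ --- an explicit hypothesis --- is never used in your argument, which is a symptom of the same gap.

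The paper's proof supplies precisely the two missing ingredients. First (Step 1), using that a tangency of $\partial F$ from inside a $\delta$-Wulff shape would force a curvature of size $1/\delta>K$, it shows that the number of intersections of $\partial F$ with each arc $\Gamma^\pm_x\subset\partial(x+\delta\Wp)$, $x\in\partial E$, is continuous in $x$, hence (by parity and by simple connectedness of $F$ and $F^c$) equal to $1$: this rules out folds and makes $\partial F$ ``graph-like'' over $\partial E$. Second (Step 2), if a Wulff shape $W=y+\rho\Wp\subset\overline F$ with $\rho<\min\{R-\delta,(1-2\delta K)/K\}$ touched $\partial F$ at two points $z^\pm$, the unit-crossing property traps the arc of $\partial F$ joining them between $\Gamma^-_{x^-}$ and $\Gamma^+_{x^+}$, hence inside $y+(\rho+2\delta)\Wp$; dilating $W$ to the first interior tangency at some radius $s\in(\rho,\rho+2\delta]$ produces a point of $\p$-curvature at least $1/(\rho+2\delta)>K$, a contradiction. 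This also shows that your accounting of the constants is off: the term $(1-2\delta K)/K$ comes from this $2\delta$-trapping of the connecting arc (not from a normal-vector discrepancy across the band), and the term $R-\delta$ is used to ensure that $W\cap(E^\delta\setminus E_\delta)$ is connected (not as the outcome of a tangency transferred to $E$). Your outer reduction by complementation and the growing-ball skeleton are fine and match the paper's Step 2, but without the crossing-number rigidity and the trapping estimate the contradiction cannot be reached.
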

\begin{proof}
We assume that $\partial F$ is at least $C^2$. If the result holds
in this case, then given a more general $C^{1,1}$ set we can smooth
it slightly, use the result for the approximations, and then pass to
the limit.

\noindent {\it Step 1.}
We have $E^\delta\setminus E_\delta=\bigcup_{x\in \partial E} x+\delta \Wp$,
and for any $x\in \partial E$, the set $x+\delta\Wp$ is tangent
to $\partial E_\delta$ (respectively, $\partial E^\delta$) at exactly
one point $x-\delta n_\p(x)$ (resp., $x+\delta n_\p$). We can
define $\Gamma^+_x$ and $\Gamma^-_x$ as the two arcs on $\partial(x+\delta\Wp)$
delimited by the points $x\pm\delta n_\p(x)$, the exponent $+$ and $-$
indicating that $\Gamma^\pm_x$ meets $\partial E$ right
``after'' or ``before'' $x$, relative to an arbitrarily chosen orientation
of the curve.

A first observation is that $\sharp(\partial F\cap \Gamma^\pm_x)=1$
for all $x$. Indeed, we check that this value is a continuous function
of $x$. If not, there will exist for instance a point where
$\sharp(\partial F\cap \Gamma^+_x)$ has a ``jump'', that is, where $\partial F$
is tangent to $\Gamma^+_x$ and contains a small piece of arc which is
inside $x+\delta\Wp$ and tangent to its boundary: in this case,
we deduce that $\kappa_\p^F(x)$ is larger than $1/\delta$ or less than
$-1/\delta$, a contradiction.

Since this value is continuous, it can only be odd (since 
$E_\delta\subset F\subset E^\delta$), moreover if it were larger
than $1$, there would be a connected component of $F$ 
(as well as one of its complement) in $E^\delta\setminus E_\delta$,
a contradiction.

\noindent{\it Step 2.} Let $\rho<\min\{ R-\delta, (1-2\delta K)/K\}$. Assume
that there exists $y\in F$ such that $W:=y+\rho\Wp\subset \overline{F}$ and
$y+\rho\Wp$ meets $\partial F$ in at least two points $z^-,z^+$
(with $z^+$ ``after'' $z^-$ with respect to the orientation
along $\partial E$). These points
must be isolated (otherwise there would be a point on $\partial F$
with curvature equal to $1/\rho>K$). Observe also that
$W\cap (E^\delta\setminus E_\delta)$ is connected (since $E^\delta$ has
an inner $(R-\delta)\Wp$-condition). To $z^+$, we can associate a
unique $x^+$ such that $z^+\in \Gamma^+_{x^+}$, and to $z^-$
a unique $x^-$ such that $z^-\in \Gamma^-_{x^-}$.
Then, the piece of curve $\Gamma$ of $\partial F$ between $z^-$ and $z^+$
lies in the region of $E^\delta\setminus E_\delta$ bounded by
$\Gamma^-_{x^-}$ and $\Gamma^+_{x^+}$, which contains points at
``distance'' at most $2\delta$ from $W$: more precisely,
$\Gamma\subset y+(\rho+2\delta)\Wp$. Hence, there exists $s\in (\rho,\rho+2\delta]$ such that $\Gamma$ is contained in $y+s\Wp$ and tangent to its boundary,
and thus a point of curvature larger than $1/s\ge 1/(\rho+2\delta)>K$
on $\partial \Wp$, which is a contradiction. Therefore, the Wulff
shapes $y+\rho\Wp$ which lie inside $F$ can touch its boundary at most
in one point, and an inner condition of radius $\min\{ R-\delta, (1-2\delta K)/K\}$
easily follows.

The proof of the outer condition is identical.
\end{proof}
\begin{remark}\label{remarkriro}
\textup{We can refine the lemma to consider a situation
where $E$ has an inner $R_i\Wp$-condition and a outer $R_o\Wp$-condition,
for two given radii $R_i,R_o>\delta$. We assume that
$-K_o\le \kappa_\p^F\le K_i$ for two nonnegative constants $K_i,K_o$, 
(still less than $1/(2\delta)$). It is then deduced that
$F$ has a inner $R'_i\Wp$-condition and a outer $R'_o\Wp$-condition,
with $R'_i=\min \{R_i-\delta,(1-2\delta K_i)/K_i\}$,
$R'_o=\min \{R_o-\delta,(1-2\delta K_o)/K_o\}$.}
\end{remark}

\begin{proof}[Proof of Theorem \ref{thexistsmooth}]

{}From~\eqref{estima11} and~\eqref{estima11bis},
we will obtain some regularity of the boundary of $T_{t+h,t}(E)$,
which will allow to iterate the variational scheme. To simplify
(and without loss of generality) we assume that the initial
curve is a Jordan curve ($E_0$ is simply connected). If not,
one may evolve separately each connected component of the boundary.

\noindent{\it Step 1.a.: The case $G_1=0$.}
In the case $G_1=0$, there exists $C$ such that $\Delta_h(t)\le Ch$.
It follows from \eqref{estimwd} that if $E$ satisfies
the $\rho\Wp$-condition, then, by solving~\eqref{eq:mainvp} with
$s=t+h$,
\begin{equation}\label{estimdistance}
|w(x)-\dpE(x)|\ \le\ h\left(C+\frac{2}{\rho}\right)
\end{equation}
in $\{|\dpE|< \rho/2\}$. By standard comparison (using for
instance Lemma~\ref{lemwulffh} again) one also can check that $w<0$
if $\dpE\le\rho/2$, and $w>0$ if $\dpE\ge\rho/2$, so that
the boundary of $T_{t,t+h}(E)$ is at ($\p$-)distance
of order $h$ from $\partial E$, if $h\le \rho^2/36$ (Lemma~\ref{lemwulffh}).
We also observe that the Hausdorff distance between the
sets $E$ and $T_{t,t+h}(E)$ is of the same order, or equivalently,
$\|\dpE-d_\p^{T_{t,t+h}(E)}\|_{L^\infty(\R^d)}\le (C+2/\rho)h$.

A further observation is that if $E$ is simply
connected, also $T_{t,t+h}(E)$ is. Indeed, if not, there would
be a connected component of either $T_{t,t+h}(E)$ or its complement
in the set $\{|\dpE|\le h(C+2/\rho)\}$. Assume $F$ is a connected
of $T_{t,t+h}(E)$ wich lies in $\{|\dpE|\le h(C+2/\rho)\}$, so that
$|F|\le 2h\Pp(E)(C+2/\rho)$. One has that (using the isoperimetric
inequality)
\begin{multline*}
\Pp(F)+\frac{1}{h}\int_F \dpE(x)+G(t+h,x)-G(t,x)\,dx
\\ \ge \ 2\sqrt{|\Wp||F|}- 2|F|\left(C+\frac{1}{\rho}\right)
\\ \ge\ 2\sqrt{|F|}
\left(\sqrt{|\Wp|}- \sqrt{|F|}\left(C+\frac{1}{\rho}\right)\right)
\end{multline*}
which is positive if $h$ is small enough (depending on $C,\rho,\Pp(E)$),
showing that $T_{t,t+h}(E)\setminus F$ has an energy strictly lower
than $T_{t,t+h}(E)$ in~\eqref{eq:mainvpset}, a contradiction.\smallskip

Sending both $a$ and $b$ to $0$, one deduces from  \eqref{estima11}
that $T_{t,t+h}(E)$ has $C^{1,1}$ boundary, and moreover
\begin{multline*}
\left\| \Div n_\p^{T_{t,t+h}(E)} + \frac{1}{h}(G(t+h,\cdot)-G(t,\cdot)) \right\|
_{L^\infty(\partial {T_{t,t+h}(E)})}
\\ 
\leq\ 
\left\| \Div \npE  + \frac{1}{h}(G(t+h,\cdot)-G(t,\cdot))\right\|_{L^\infty(E\triangle T_{t,t+h}(E))}.
\end{multline*}
Since $(G(t+h,x)-G(t,x))/h$ is $L$-Lipschitz in $x$ for some $L>0$, and
$\Div \npE$ is bounded by $4/\rho^2$ in $\{ |\dpE|\le \rho/2\}$, we deduce
\begin{multline}\label{totalcontrol}
\left\| \Div n_\p^{T_{t,t+h}(E)} + \frac{1}{h}(G(t+h,\cdot)-G(t,\cdot))
\right\|_{L^\infty(\partial {T_{t,t+h}(E)})}
\\ \leq\ 
\left\| \Div \npE  + \frac{1}{h}(G(t+h,\cdot)-G(t,\cdot))\right\|_{L^\infty(\partial E)}\ +\ \frac{4h}{\rho^2}\left(C+\frac{2}{\rho}\right)
\\ \leq\ 
\left\| \Div \npE  + \frac{1}{h}(G(t,\cdot)-G(t-h,\cdot))\right\|_{L^\infty(\partial E)}\ +\ h\left(L+ \frac{4}{\rho^2}\right)\left(C+\frac{2}{\rho}\right)\,,
\end{multline}
provided $h$ is small enough (depending on $\rho,L,C$).
Eventually,  it follows that the curvature of $\partial T_{t,t+h}(E)$
 (since $d=2$, the total and mean curvature coincide) has a global
estimate $1/\rho+2C+O(h)$, and one will
deduce from Lemma~\ref{lemmWC}
that for $h$ small enough, this new set
also satisfies the $\rho'\Wp$-condition, with
$\rho'=\rho/(1+(2C+O(h))\rho )>0$, provided the assumptions of the lemma
are fulfilled. 

We now consider $E_0,R$ as in Theorem~\ref{thexistsmooth}, and
let for $h>0$ and any $n\ge 1$, $E^h_n = T_{(n-1)h,nh}(E_0)$.
We also define $E^h(t)=E^h_{[t/h]}$ for $t\ge 0$.
A first observation is that if $x\in (E_0)_R$, $x+R\Wp\subset E_0$ 
so that if $r(t)$ solves $\dot{r}=-(1/r+C)$ with $r(0)=R$,
for any $\eta>0$ (small), $x+(r(t)-\eta)\Wp \subset E^h(t)$ for
$h$ small enough, as long as $r(t)\ge\eta$.
The function $r(t)$ solves $r(t)-R-\ln\left(\frac{1+Cr(t)}{1+CR}\right)/C=-Ct$,
and given $\delta \in (0,R)$ (which will be precised later on),
there exists $T_1(R,C,\delta)$ such that
if $t\le T_1$ and $h>0$ is small enough,
\begin{equation}\label{subsetU}
(E_0)_\delta\ \subset\ E^h(t)\ \subset\ (E_0)^\delta\,.
\end{equation}
We let $U=\{|\dpE|\le \delta\}$.

Letting $E^h_1= T_{0,h}(E_0)$, we deduce from~\eqref{totalcontrol}
that if $h<R^2/36$ is small enough,
\[
A_1^h\ :=\ \left\|\Div n_\p^{E^h_1}+ \frac{1}{h} (G(h,\cdot)-G(0,\cdot))
\right\|_{L^\infty(\partial {E^h_1})}\ \le\ \frac{1}{R}+C+ 
\frac{1}{4}\left(C+\frac{2}{R}\right)\ =:\ M_1.
\]
For $n\ge 1$, we then
define iteratively the sets $E^h_{n+1}=T_{nh,(n+1)h} (E^h_n)$ and let
\[
A_{n+1}^h\ :=\ \left\|\Div n_\p^{E^h_{n+1}}
+ \frac{1}{h} (G((n+1)h,\cdot)-G(nh,\cdot))\right\|_{L^\infty(\partial {E^h_{n+1}})}\,.
\]
Let now $R_1 = (2M_1+C))^{-1}$.
As long as $A_n\ge 2M_1$, one can deduce from Lemma~\ref{lemmWC},
using~\eqref{subsetU} and provided we had chosen $\delta<R_1/2$,
that $E_{n+1}^h$ satisfies
the $R_1\Wp$-condition, so that~\eqref{totalcontrol} holds
(with $E=E^h_n$, $\rho=R_1$) and
\[
A_{n+1}^h\ \le\ A_n^h \,+\, 
h\left(L+ \frac{4}{R_1^2}\right)\left(C+\frac{2}{R_1}\right)\,.
\]
By induction, we deduce that (letting $B=(L+4/R_1^2)(C+2/R_1)$)
$A_{n+1}^h\le M_1+(n+1)hB$ as long as $nh\le \min\{T_1,M_1/B\}:=T>0$.

We observe that since $\delta<R_1/2$, as long
as $nh\le T$, not only $\partial E_n^h\subset U$,
but all the signed distance functions to the boundaries of $E_n^h$
are in $C^{1,1}(U)$. Notice that $T$ and the width $\delta$
of the strip $U$ depend only on $R,C,L$.

\noindent{\it Step 1.b.: The case $G_2=0$.}
We now show that we can obtain a similar control in case of a space
independent forcing term, which can be the derivative of a 
continuous function $G_2$ (a relevant example is a Brownian forcing). 
In that case, we can consider the algorithm from
a different point of view: given the set $E$, we first consider
the set $E'$ with signed distance function $d_\p^{E'}:=\dpE(x)+G(s)-G(t)$,
then, we apply to this set $E'$ the algorithm
with $G\equiv 0$, that is, we solve~\eqref{eq:mainvp} for $E=E'$ and $G=0$:
\begin{equation*}
\min_{w\in L^2(B_R)} \int_{B_R}\po(D w)\ +\ \frac{1}{2(s-t)}
\int_{B_R} \Big(w(x)-d_\p^{E'}(x)\Big)^2\,dx\,,
\end{equation*}
and then let $E''=\{w<0\}$. It is clear that this is equivalent to the
original algorithm, so that $E''=T_{t,s}(E)$.

Assume in addition that $E$ has an inner $r_i\Wp$-condition  and
a outer $r_o\Wp$ condition, for some radii $r_i,r_o>0$. If $(s-t)$ is
small enough,
then $E'$ has the
inner $r'_i\Wp$-condition  and outer $r'_o\Wp$ condition with
$r'_i=r_i-G(s)+G(t)$ and $r'_o=r_o+G(s)-G(t)$. In particular,
$d_\p^{E'}=\dpE(x)+G(s)-G(t)$ is locally $C^{1,1}$ in the strip
$\{ -r'_i<d_\p^{E'}<r'_o\}$ and the surface $\partial E'$ has a
curvature which satisfies a.e.
\begin{equation}\label{innerouterbound}
-\frac{1}{r'_o}\ \le\ \Div n_\p^{E'}\ \le\ \frac{1}{r'_i}\,.
\end{equation}
As before, from \eqref{estimwd} we have that, if $h=s-t$ is small
enough, then
\begin{equation}\label{estimdistance2}
|w(x)-d_\p^{E'}(x)|\ \le\ \frac{2h}{\min\{r'_i,r'_o\}}\,,
\end{equation}
showing that the boundary of $T_{t,t+h}(E)$ remains close
to the boundary of $E'$ (provided $r'_i,r'_o$ are controlled
from below).

From~\eqref{estima11bis} (with $G=0,E=E'$)
and~\eqref{innerouterbound}, \eqref{estimdistance2}, we
obtain that if $h$ is small enough,
\begin{equation*}
-\frac{1}{r'_o}- \frac{2h}{(r'_o)^2\min\{r'_i,r'_o\}}\ \le
\ \Div n_\p^{T_{t,t+h}(E)}\ \le
\ \frac{1}{r'_i}+ \frac{2h}{(r'_i)^2\min\{r'_i,r'_o\}}\,,
\end{equation*}
and in particular we can deduce from 
Lemma~\ref{lemmWC} and Remark~\ref{remarkriro} that 
$T_h(E)$ satisfies the inner $r''_i\Wp$
and  outer $r''_o\Wp$-conditions with
\begin{equation*}
r''_i \ \ge\ r'_i - \frac{ch}{r'_i}\,,
\qquad r''_o \ \ge\ r'_o - \frac{ch}{r'_o}\,,
\end{equation*}
for some constant $c>0$.


As in the previous step, we
now consider $E_0,R$ as in Theorem~\ref{thexistsmooth}, we
let $E_0^h=E_0$
and define for each $n\ge 0$, $E_{n+1}^h:= T_{nh,(n+1)h}(E_n^h)$.
Let $r_o^0=r_i^0=R$. The previous analysis shows that $E_1^h$
has the inner $r_i^1\Wp$ and the outer $r_o^1\Wp$-conditions with
\[
r_i^1 \ \ge\ r_i^0 -G(h)+G(0) - \frac{ch}{R},
\qquad
r_o^1 \ \ge\ r_o^0 +G(h)-G(0) - \frac{ch}{R},
\]
provided $|G(h)-G(0)|\le R/2$ (for some constant $c>0$).
Now, assuming that $n$ is such that
\[
r_i^n \ \ge\ r_i^0 -G(nh)+G(0) - \frac{c nh}{R},
\qquad
r_o^n \ \ge\ r_o^0 +G(nh)-G(0) - \frac{c nh}{R},
\]
we deduce that
\[
r_i^{n+1} \ \ge\ r_i^0 -G((n+1)h)+G(0) - \frac{c(n+1)h}{R},
\ 
r_o^{n+1} \ \ge\ r_o^0 +G((n+1)h)-G(0) - \frac{c(n+1)h}{R},
\]
as long as $|G((n+1)h)-G(0)|+ c(n+1)h/R \le R/2$.
Define $T$ such that $\max_{0\le t\le T} |G(t)-G(0)|+ct/R\le R/4$,
and let $U=\{ |d_\p^{E_0}|<R/4\}$: then, on one hand,
$\partial E_n^h\subset U$ for all $n\ge 0$ with $nh\le T$, on
the other hand, $E_n^h$ satiafies the $(R/2)\Wp$-condition, 
so that $d_\p^{E_n^h}\in C^{1,1}(U)$. Again, $U$ and $T$ depend
only on $G$ and $R$.

\noindent{\it Step 3: Conclusion.} For $t\in [0,T]$ and $h$ small, we let
$E_h(t) = E^h_{[t/h]}$, $d_h(t,x)=d_\p^{E(t)}(x)$, and we now  send $h\to 0$. 
Since $d_h-G$ is uniformly Lipschitz in $[0,T]\times U$ (in 
time, in fact, we have $|d_h(t,x)-G(t,x)-d_h(s,x)+G(s,x)|\le c|t-s|$ if
$|t-s|\ge h$, for some constant $c$), up to a subsequence $(h_k)$ it converges
uniformly to some $d$ with $d-G\in \Lip([0,T]\times U)$, moreover,
at each $t>0$, $E_{h_k}(t)$ converges (Hausdorff) to a set $E(t)$ with
$d(t,x)=d_\p^{E(t)}(x)$. Let us establish~\eqref{smoothevol}.

For $n\le T/h-1$ and $x\in \partial E^h_{n+1}$, by definition
of the scheme we have 
\[
-d_\p^{E^h_n}(x) - h\,\Div n_\p^{E^h_{n+1}}(x) - G(t+h,x)+G(t,x)\ =\ 0.
\] 
As $(G(t+h,\cdot)-G(t,\cdot))/h$
is $L$-Lipschitz in $U$, there holds
\[
\left|(G(t+h,x)-G(t,x))-\left(G(t+h,\Pi_{\partial E^h_{n+1}}(x))-G(t,\Pi_{\partial E^h_{n+1}}(x))\right)\right|
\le C h|d_\p^{E^h_{n+1}}(x)|
\]
where $C$ depends only on $L$ and $\p$, where we set
$$
\Pi_{\partial E^h_{n+1}}(x) = x - d^{E^h_{n+1}}_\p(x)n^{E^h_{n+1}}_\p(x).
$$
Choose now $x\in U$  such that $d_\p^{E^h_{n+1}}(x)\ge 0$.
In this case, it follows that
\[
d_\p^{E^h_n}(x)-d_\p^{E^h_n}(\Pi_{\partial E^h_{n+1}}(x))
\ \le\ \p(x-\Pi_{\partial E^h_{n+1}}(x))\ =\ d_\p^{E^h_{n+1}}(x).
\]
Hence, 
\begin{align*}
& d_\p^{E^h_{n+1}}(x)
-d_\p^{E^h_n}(x) - h\Div n_\p^{E^h_{n+1}}(x)
\\ 
& \ge -d_\p^{E^h_n}(\Pi_{\partial E^h_{n+1}}(x))
- h\Div n_\p^{E^h_{n+1}}(\Pi_{\partial E^h_{n+1}}(x)) + O\left( h|d_\p^{E^h_{n+1}}(x)|\right)
\\
&= G(t+h,\Pi_{\partial E^h_{n+1}}(x))-G(t,\Pi_{\partial E^h_{n+1}}(x)) + O\left( h|d_\p^{E^h_{n+1}}(x)|\right)
\\
&= G(t+h,x)-G(t,x)+ O\left( h|d_\p^{E^h_{n+1}}(x)|\right).
\end{align*}
Dividing by $h$ and letting $h\to 0^+$, we then get
\[
\frac{\partial (\dpE-G)}{\partial t}(t,x)
-\Div\nabla \po(\nabla \dpE)(t,x)\ \ge\ O\left(|\dpE(t,x)|\right)
\qquad (t,x)\in U\times [0,T]\cap \{\dpE(t,x)>0\},
\]
which implies 
\[
\frac{\partial (\dpE-G)}{\partial t}(t,x)
-\Div\nabla \po(\nabla \dpE)(t,x)\ \ge\ O\left(|\dpE(t,x)|\right)
\qquad (t,x)\in U\times [0,T].
\]
By taking $x\in U$ such that $d_\p^{E^h_{n+1}}(x)\le 0$, reasoning as above we get
\[
\frac{\partial \dpE}{\partial t}(t,x)
-\Div\nabla \po(\nabla \dpE)(t,x)-g(t,x)\ \le\ O\left(|\dpE(t,x)|\right)
\qquad (t,x)\in U\times [0,T],
\]
thus obtaining \eqref{smoothevol}.
\end{proof}


\begin{remark}\textup{
When $\p(x)=|x|$ and $G_2=0$, the an existence and uniqueness result for $\p$-regular flows
has been proved in \cite{DLN} in any dimension.}
\end{remark}

\subsection{Uniqueness of $\p$-regular flows.}\label{secuniq}

We now show uniqueness of the regular evolutions given
by Theorem~\ref{thexistsmooth}. 

\begin{theorem}\label{propuniq}
Given an initial set $E_0$, the flow of Theorem~\ref{thexistsmooth}
is unique. More precisely, if two flows $E$, $E'$ are given, starting
from initial sets $E_0\subseteq E'_0$, then $E(t)\subseteq E(t)$ for
all $t\in [0,\min\{T,T'\}]$ (where $T,T'$ are respectively the time
of existence of regular flows starting from $E_0$ and $E'_0$).
\end{theorem}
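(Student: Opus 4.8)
The plan is to prove the comparison statement directly — uniqueness then follows by taking $E_0=E'_0$ and applying the comparison in both directions. So assume $E_0\subseteq E'_0$ and abbreviate $d(t,\cdot):=\dpE(t,\cdot)$ and $d'(t,\cdot):=d_\p^{E'(t)}$, the two $\p$-signed distances furnished by Theorem~\ref{thexistsmooth} on their strips $U,U'$. Since nested sets have ordered signed distances, I would encode the inclusion in the single scalar $\zeta(t):=\esssup_{x\in\partial E(t)} d'(t,x)$: the sets being bounded with connected $C^{1,1}$ boundary, $E(t)\subseteq E'(t)$ holds iff $\zeta(t)\le 0$, and $\zeta(0)\le 0$. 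The decisive feature of testing on the zero level set $\partial E(t)$ is that there the error term $\lambda|\dpE|$ in \eqref{smoothy2} vanishes identically.

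Next I would produce a differential inequality for $\zeta$. At a.e.\ $t$ where $\zeta$ is differentiable pick $x(t)\in\partial E(t)$ attaining the supremum. Since $d'(t,\cdot)$ has a constrained maximum along the curve $\partial E(t)$ at $x(t)$, its tangential gradient vanishes, so $\nabla d'=\nabla d=\nu$ there (both unit for $\po$ and outward-pointing because $E\subseteq E'$). Differentiating $\zeta(t)=d'(t,x(t))$ and using $d(t,x(t))\equiv 0$, i.e.\ $\nabla d\cdot\dot x=-\partial_t d$, together with $\nabla d'=\nabla d$, the tangential motion cancels and $\tfrac{d}{dt}\zeta=\partial_t d'(t,x(t))-\partial_t d(t,x(t))$. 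Applying \eqref{smoothy2} to both flows at $x(t)$ — exactly for $d$ (since $|\dpE|=0$ on $\partial E$) and up to $\lambda|d'(t,x(t))|=\lambda|\zeta(t)|$ for $d'$ — the forcing terms cancel in the difference (this cancellation is precisely what renders the argument insensitive to the roughness of $G$ in time, thus covering the merely continuous case $G=G_1$), leaving
\[
\tfrac{d}{dt}\zeta(t)\ \le\ \big(\Div\nabla\po(\nabla d')-\Div\nabla\po(\nabla d)\big)(t,x(t))\ +\ \lambda|\zeta(t)|.
\]

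To close it I must check the curvature difference at $x(t)$ is $\le 0$. Testing against the arclength parametrization $\gamma$ of $\partial E(t)$, the identity $d\equiv 0$ on $\partial E(t)$ gives $D^2d(\tau,\tau)=-\nu\cdot\gamma''$, while $d'(\gamma(s))\le\zeta$ with equality at $x(t)$ gives $D^2d'(\tau,\tau)\le-\nu\cdot\gamma''=D^2d(\tau,\tau)$ in the tangent direction $\tau$. As $\po$ is smooth and elliptic, $D^2\po(\nu)$ annihilates $\nu$ (by $1$-homogeneity) and is positive on $\nu^\perp$, so in dimension two $\Div\nabla\po(\nabla d)=D^2\po(\nu)(\tau,\tau)\,D^2d(\tau,\tau)$ and likewise for $d'$; hence the curvature difference equals $D^2\po(\nu)(\tau,\tau)\,[D^2d'(\tau,\tau)-D^2d(\tau,\tau)]\le 0$. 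Therefore $\tfrac{d}{dt}\zeta\le\lambda|\zeta|$ a.e., and since $\zeta(0)\le 0$ while the right-hand side is $\le 0$ whenever $\zeta=0$, a Gronwall argument prevents $\zeta$ from crossing zero, giving $\zeta\le 0$, i.e.\ $E(t)\subseteq E'(t)$, on $[0,\min\{T,T'\}]$.

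The hard part is rigor under the available regularity: boundaries are only $C^{1,1}$, so $D^2d,D^2d'$ exist merely a.e., $\zeta$ is only Lipschitz in time, the maximizer $x(t)$ need be neither unique nor regular, and \eqref{smoothy2} holds only a.e.\ with $G$ possibly nondifferentiable in $t$. I would therefore run the computation through the integrated inequality \eqref{smoothy} rather than pointwise time derivatives, replace the pointwise second-order touching test by an inner/outer Wulff-shape comparison at $x(t)$ whose radii are controlled by the $R'\Wp$-conditions (as in Lemma~\ref{lemmWC}), and use one-sided (Danskin-type) derivatives of $\zeta$; one must also verify that $\partial E(t)$ remains inside the strip $U'$, so that $d'$ is defined along it, which holds as long as $\zeta$ stays bounded.
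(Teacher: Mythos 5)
Your formal skeleton (maximum of $d'$ over $\partial E(t)$, gradient alignment at the contact point, ordering of the tangential second derivatives, cancellation of $G$, Gronwall against the error $\lambda|\dpE|$) is the classical comparison-principle computation, and at the formal level it is the right heuristic. But there is a genuine gap, and it sits exactly at the steps you defer to the last paragraph. The inequality \eqref{smoothy}--\eqref{smoothy2} holds only for \emph{a.e.} $x\in U$ (with an exceptional set that may depend on $t,s$), and $\Div z=\Div\nabla\po(\nabla\dpE)$ is merely an $L^\infty$ function: it has no trace on the curve $\partial E(t)$, which is Lebesgue-null in $U$. So ``applying \eqref{smoothy2} at $x(t)$'' is not a licensed operation, and switching to the integrated form \eqref{smoothy} does not help, since that inequality is also only a.e.\ in $x$. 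The same problem recurs in the curvature step: for $C^{1,1}$ distance functions, $D^2 d$ and $D^2 d'$ exist only a.e., so the touching inequality $D^2d'(\tau,\tau)\le D^2d(\tau,\tau)$ cannot be asserted at the particular contact point. Your proposed substitute is too weak: the $R'W_\p$-condition (and Lemma~\ref{lemmWC}) yields the two-sided \emph{bounds} $|\kappa_\p|\le 1/R'$, not the \emph{ordering} $\kappa_\p^{E'}\le\kappa_\p^{E}$ at the contact point, and no mechanism in the proposal produces that ordering without second-order information precisely there. These are not finishing touches; they are the core analytical difficulty of the theorem.

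It is worth contrasting this with what the paper actually does, because its proof is engineered to bypass exactly these obstacles: it never runs a contact-point argument on the $\p$-regular flows themselves. Instead, it approximates a regular flow by \emph{strict} $C^2$ sub/superflows --- setting $d^\alpha=\dpE-\alpha t-\alpha/(4\lambda)$, which converts the Gronwall-type error $\lambda|\dpE|$ into a strict margin $-\delta(s-t)$ in a thin tube, and then mollifying in space, with the smoothness and ellipticity of $\po$ used to control the commutator \eqref{errormollif} --- and then invokes the consistency/comparison theorem of \cite{mcmf} for strict $C^2$ flows, which is itself proved through the implicit scheme of Section~\ref{secATW}; there, the monotonicity \eqref{inclusion} of $T_{t,s}$ is what propagates the ordering, replacing the maximum principle at a contact point, while the strictness margin $\delta$ and the $C^2$ regularity absorb all the a.e.\ issues your argument leaves open. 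To salvage your direct route you would need a genuine viscosity-type device (doubling of variables or sup-convolution) adapted to the merely integrated-in-time inequality \eqref{smoothy}, or you would first have to regularize the flows into strict $C^2$ ones --- at which point you have essentially reconstructed the paper's proof.
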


The thesis essentially follows from the results in~\cite{mcmf}. 
Indeed, in~\cite{mcmf} it is proved 
a comparison result for strict $C^2$ sub- and superflows, 
based again on a consistency result for the scheme defined in
Section~\ref{secATW}. A strict $C^2$ subflow is defined a in 
Theorem~\ref{thexistsmooth}, except that $\dpE(t,x)$ is
required to be in $C^0([0,T];C^2(U))$, and \eqref{smoothevol} is replaced
with (for $0\le t<s\le T$, $x\in U$)
\begin{equation}\label{smoothsubflow}
\dpE(s,x)-\dpE(t,x)
-\int_s^t\Div\nabla \po(\nabla \dpE)(\tau,x)\,d\tau
-G(s,x)+G(t,x)
\ \le\ -\delta(s-t) 
\end{equation}
for some $\delta>0$. A superflow will satisfy the reverse inequality,
with $-\delta(s-t)$ replaced with $\delta(s-t)$.
For technical reasons (in order
to make sure, in fact, that the duration time of these flows is
independent on $\delta$), we will ask that these flows are defined,
in fact,
in a tubular neighborhood $W$ of $\bigcup_{0\le t \le T}\partial E(t)$,
not necessarily of the form $[0,T]\times U$.

The thesis then follows from the consistency result
in~\cite[Thm.~3.3]{mcmf}, once we show the following approximation result.

\begin{lemma}
Let $E(t)$ be an evolution as in Theorem~\ref{thexistsmooth},
starting from a compact set $E_0$ satisfying the $R\Wp$-conditions for some $R>0$.
Then, for any $\e>0$, there exists $T'>0$ (depending only on $R$),
a set $E'_0$
and a strict $C^2$ subflow $E'(t)$ starting from $E'_0$ such that for all
$t\in [0,T']$, $E(t)\subset\subset E'(t)\subset \{\dpE\le \e\}$.
\end{lemma}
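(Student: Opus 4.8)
The plan is to construct $E'(t)$ as a \emph{smoothed outer $\p$-neighbourhood} of the given flow $E(t)$, equipped with a time-growing gap that produces the strict slack. First I would fix a mollifier $\eta_\sigma$ and set $\tilde d_\sigma(t,\cdot):=\eta_\sigma*\dpE(t,\cdot)$, which is $C^\infty$ in $x$ and, since $\dpE\in C^0([0,T];\Lip(U))$, continuous in $t$ with values in $C^2$. I then define
\[
E'(t):=\{x:\ \tilde d_\sigma(t,x)<\psi(t)\},\qquad \psi(t):=\tfrac\e8+\beta t,
\]
for a rate $\beta>0$ to be fixed and $\sigma\ll\e$, over a time interval $[0,T']$ with $T'\le T$. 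Because $E(t)$ is $C^{1,1}$ with an $R'\Wp$-condition, $\nabla\tilde d_\sigma\to\nabla\dpE$ uniformly and $\|\tilde d_\sigma-\dpE\|_\infty=O(\sigma)$, whence $\{\dpE<\psi(t)-C\sigma\}\subset E'(t)\subset\{\dpE<\psi(t)+C\sigma\}$. Since $\psi(t)>C\sigma$ this already yields the strict containment $E(t)\subset\subset E'(t)$, and once $\psi(t)+C\sigma\le\e$ on $[0,T']$ also $E'(t)\subset\{\dpE\le\e\}$.

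For the $C^2$-regularity I would note that $\po(\nabla\tilde d_\sigma)=1+O(\sigma)$, so $|\nabla\tilde d_\sigma|$ is bounded away from $0$ near $\partial E'(t)$; by the implicit function theorem $\partial E'(t)$ is a $C^\infty$ curve whose curvature is bounded uniformly in $t$ and $\sigma$. Its reach is therefore bounded below, and on a thin tubular neighbourhood $W$ of $\bigcup_t\partial E'(t)$ the $\p$-distance $d_\p^{E'}$ lies in $C^0([0,T'];C^2(W))$, as demanded by the definition of a strict $C^2$ subflow.

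The real work is the strict subflow inequality~\eqref{smoothsubflow} for $d_\p^{E'}$, which I expect to be the main obstacle. The key structural fact is that $\nabla\po$ is $0$-homogeneous, so $\nabla\po(\nabla\tilde d_\sigma)=\nabla\po(\nabla d_\p^{E'})$ on $\partial E'(t)$ (the two gradients are parallel there); hence the curvature computed from $\tilde d_\sigma$ agrees with the genuine $\p$-curvature of $\partial E'(t)$ up to $O(\sigma)$ in the thin tube, and it suffices to control $\Div\nabla\po(\nabla\tilde d_\sigma)$. Writing $\Div\nabla\po(\nabla\tilde d_\sigma)=\nabla^2\po(\nabla\tilde d_\sigma):(\eta_\sigma*\nabla^2\dpE)$ and comparing with $\eta_\sigma*\Div\nabla\po(\nabla\dpE)$, the difference is a commutator $\eta_\sigma*(AB)-A\,(\eta_\sigma*B)$ with $A=\nabla^2\po(\nabla\dpE)$ Lipschitz (here I use that $\po$ is smooth and, since $E(t)$ is $C^{1,1}$, $\nabla\dpE$ is Lipschitz) and $B=\nabla^2\dpE\in L^\infty$; such a commutator is $O(\sigma)$ in $L^\infty$. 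Applying $\eta_\sigma*$ to the integrated flow estimate~\eqref{smoothy} and using $d_\p^{E'}\approx\tilde d_\sigma-\psi$, I would obtain for $0\le t<s\le T'$ and a.e.\ $x\in W$
\[
\frac1{s-t}\Big(d_\p^{E'}(s,x)-d_\p^{E'}(t,x)-\int_s^t\Div\nabla\po(\nabla d_\p^{E'})\,d\tau-G(s,x)+G(t,x)\Big)\ \le\ 2\lambda\e+C\sigma-\beta .
\]
The mollification of the forcing contributes only $O(\sigma)$: it is exact when $G=G_1$ (independent of $x$), and of order $\sigma(s-t)$ when $G=G_2$, since $G_2(s,\cdot)-G_2(t,\cdot)$ is Lipschitz with constant $O(s-t)$. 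Choosing $\beta:=\delta+2\lambda\e+C\sigma$ makes the right-hand side $\le-\delta$, which is exactly~\eqref{smoothsubflow}.

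Finally, the requirements $\psi(t)+C\sigma\le\e$ on $[0,T']$ and $\psi(0)>C\sigma$ reduce to $\beta T'\le 7\e/8$ and $\sigma\ll\e$; taking $\delta\le\e$ and $\sigma\le\e$ gives $\beta\le(3+2\lambda+C)\e$, hence $T'\ge c/(1+\lambda)$, a bound that depends only on $R$ and $G$ through $\lambda$ and is independent of both $\e$ and of the slack $\delta$ (which is what guarantees the uniform duration alluded to before the lemma). The crux of the argument is thus the third step: that spatial mollification interacts with the nonlinear operator $\Div\nabla\po(\nabla\,\cdot\,)$ only through $O(\sigma)$ errors — this is where the $C^{1,1}$ regularity of $E(t)$ and the smoothness of $\po$ are essential — together with the care needed to pass between the non-distance level-set function $\tilde d_\sigma$ and the genuine $\p$-distance $d_\p^{E'}$.
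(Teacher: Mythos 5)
Your proposal is correct and is essentially the paper's own argument: the paper likewise enlarges the flow by a spatially constant shift growing linearly in time (it mollifies $d^\alpha=\dpE-\alpha t-\alpha/(4\lambda)$ and takes its zero level set, which coincides with your ``mollify, then raise the level by $\psi(t)$'' construction since the shift commutes with spatial convolution), and its key step is the same commutator estimate between $\eta_r\,*$ and $\Div\nabla\po(\nabla\,\cdot\,)$, exploiting $\po(\nabla\dpE)=1$ a.e.\ in the tube and the $C^{1,1}$ bound on $\dpE$. One caveat: since ``smooth'' in this paper means only $\po\in C^2(\R^2\setminus\{0\})$, the matrix $\nabla^2\po(\nabla\dpE)$ is uniformly continuous but not necessarily Lipschitz, so your commutator is $o(1)$ as $\sigma\to 0$ rather than $O(\sigma)$ --- this still suffices (fix $\delta,\e$ first, then take $\sigma$ small), and it is precisely the form in which the paper states the estimate.
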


\begin{proof}
We sketch the proof and refer to~\cite{ACN} for more details.

The idea is to let first $d^\alpha=\dpE-\alpha t - \alpha/(4\lambda)$, for some small $\alpha>0$, 
with $\alpha (T+1/(4\lambda))< \e$.
One can then deduce from~\eqref{smoothevol} that, for all $s>t$,
\begin{multline*}
d^\alpha(s,x)-d^\alpha(t,x)
-\int_s^t\Div\nabla \po(\nabla d^\alpha)(\tau,x)\,d\tau
-G(s,x)+G(t,x)
\\ \le\ (s-t)(\lambda\max_{t\le \tau\le s}|\dpE(x,\tau)|-\alpha)+
\ \le\ \lambda(s-t)\left(\max_{t\le \tau\le s}|d^\alpha(x,\tau)|
+\alpha(s-{\textstyle\frac{3}{4}}\lambda^{-1})\right)\,.
\end{multline*}
Let $T':=\min\{T,1/(2\lambda)\}$, and let $\beta=\alpha/(8\lambda)$:
then if we let $W=\{(t,x)\,:\,0\le t\le T\,, |d^\alpha(t,x)|<\beta\}$,
we deduce that for $(t,x), (s,x)\in W$,
\[
d^\alpha(s,x)-d^\alpha(t,x)
-\int_s^t\Div\nabla \po(\nabla d^\alpha)(\tau,x)\,d\tau
-G(s,x)+G(t,x) \ \le\ -{\beta}(t-s)\,.
\]
Hence $\{d^\alpha\le 0\}$ is almost a $C^2$ subflow, except for the fact that it is
not $C^2$. However, this is not really an issue, as we will now check.
Consider indeed a spatial mollifier 
\begin{equation}\label{eqmoll}
\eta_r(x)= \frac{1}{r^d} \eta\left(\frac x r\right)
\end{equation}
where as usual $\eta\in C_c^\infty(B(0,1);\R_+)$, $\int_{\R^d}\eta(x)\,dx=1$.
Let $d^\alpha_r=\eta_r*d^\alpha$ (for $r$ small).
Observing, as before, that $G(s,x)-G(t,x)$ is $(s-t)L$-Lipschitz in $x$, 
one has
$|\eta_r*(G(s,\cdot)-G(t,\cdot))(x)-(G(s,x)-G(t,x))|\le (s-t)Lr$.
Hence, the level set $0$ of $d^\alpha_r$ will be a strict $C^2$ subflow,
for $r$ small enough,
if we can check that the difference
\begin{equation}\label{errormollif}
\eta_r * (\Div\nabla \po(\nabla d^\alpha)(\tau,\cdot) )(x)
\,-\, \Div\nabla \po(\eta_r*\nabla d^\alpha)(\tau,x) 
\end{equation}
can be made arbitrarily small for $r$ small enough and
any $(\tau,x)\in W$ (possibly reducing slightly the
width of $W$).
Now, for $(\tau,x)\in W$,
\[
\eta_r * (\Div\nabla \po(\nabla d^\alpha)(\tau,\cdot) )(x)
\ =\ \int_{B(0,r)} \eta_r(z) D^2\po(\nabla d^\alpha(\tau,x-z)):D^2 d^\alpha(\tau,x-z)\,dz
\]
while
\[
\Div\nabla \po(\eta_r*\nabla d^\alpha)(\tau,x) )
\ =\ \int_{B(0,r)} \eta_r(z) D^2\po((\eta_r*\nabla d^\alpha(\tau,\cdot))(x))
:D^2 d^\alpha(\tau,x-z)\,dz\,.
\]
The difference in~\eqref{errormollif} is therefore
\[
\int_{B(0,r)} \eta_r(z) (D^2\po(\nabla d^\alpha(\tau,x-z)) - 
D^2\po((\eta_r*\nabla d^\alpha(\tau,\cdot))(x))) :D^2 d^\alpha(\tau,x-z)\,.
\]
Now, since $D^2\po$ is at least continuous (uniformly in
$\{ \po(\xi)\ge  1/2\}$), $\po(\nabla d^\alpha)=1$ a.e.~in $W$,
while $D^2d^\alpha$ is globally bounded (and $\nabla d^\alpha$ uniformly Lipschitz),
this difference can be made arbitrarily small as $r\to 0$, and we
actually deduce that, in such a case, $E'(t)=\{d^\alpha_r\le 0\}$ is a
strict $C^2$-superflow starting from $E'_0=\{\dpE\le \beta \}$,
which satisfies the thesis of the Lemma.
\end{proof}

\begin{remark}\rm
The uniqueness result holds in any dimension $d\ge 2$, with exactly the same proof. It is also not necessary to assume that $G_1$ or $G_2$ vanishes.
\end{remark}

\section{General anisotropies}\label{seccrystal}

An important feature of  Theorem~\ref{thexistsmooth} is that the
existence time, as well as the neighborhood where $\dpE$ is $C^{1,1}$,
are both independent on the anisotropy, and only depend on the
radius $R$ for which $E_0$ satisfies the $R\Wp$-condition.
This allows us to extend the existence result to general anisotropies,
by the approximation argument given in Lemma \ref{lemmapproxcrystal}.

\begin{theorem}\label{teoexistgen} 
Assume $G_1=0$ or $G_2=0$, and let $(\p,\po)$ be an arbitrary
anisotropy. Let $E_0\subset \R^2$ an initial set
with compact boundary, satisfying the $R\Wp$-condition for some $R>0$. 
Then, there exist $T>0$, 
and a $\p$-regular flow $E(t)$ defined on $[0,T]$ 
and starting from $E_0$.

More precisely, 
there exist $R'>0$ and a neighborhood $U$
of $\bigcup_{0\le t\le T} \partial E(t)$ in $\R^2$
such that the sets $E(t)$ satisfy the $R'W_\p$-condition for all $t\in [0,T]$,
the $\p$-signed distance function
$\dpE(t,x)$ from $\partial E(t)$ belongs to $C^0([0,T];\Lip(U))$,
$(\dpE-G)\in \Lip([0,T]\times U)$ and
\begin{equation}\label{nonsmoothevol}
\left|\frac{\partial (\dpE-G)}{\partial t}(t,x)-\Div z(t,x)
\right|\ \le\ \lambda|\dpE(t,x)|
%
\end{equation}
for a.e.~$(t,x)\in [0,T]\times U$,
where $\lambda$ is a positive constant and $z\in L^\infty([0,T]\times U;\R^2)$ is such that 
$z\in\partial\po(\nabla\dpE)$ a.e. in $[0,T]\times U$.
The time $T$, the radius $R'$, and the constant $\lambda$, only depend
on $R$ and $G$.
\end{theorem}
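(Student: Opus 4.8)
The plan is to realize the flow for the general anisotropy $\p$ as a limit of the smooth elliptic flows produced by Theorem~\ref{thexistsmooth}, crucially exploiting the fact that their existence time $T$, radius $R'$, neighborhood $U$ and constant $\lambda$ are all independent of the anisotropy. First I would fix a sequence $\p_\e$ of smooth and elliptic anisotropies with $\p_\e\ge\p$ and $\p_\e\to\p$ locally uniformly (e.g.\ the mollified construction of the Remark following the Corollary). Applying Lemma~\ref{lemmapproxcrystal} to $E_0$ yields sets $E_0^\e$ satisfying the $R\Wpe$-condition, with $\partial E_0^\e\to\partial E_0$ in the Hausdorff sense. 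Theorem~\ref{thexistsmooth}, applied to each pair $(\p_\e,E_0^\e)$, then produces $\p_\e$-regular flows $E_\e(t)$ on a \emph{common} interval $[0,T]$, with \emph{common} $R'$, $U$ and $\lambda$, since these quantities depend only on $R$ and $G$. Writing $d_\e(t,x):=d^{E_\e(t)}_{\p_\e}(x)$ and $z_\e:=\nabla\po_\e(\nabla d_\e)$, I record the uniform estimates of that theorem: $(d_\e-G)$ is uniformly Lipschitz on $[0,T]\times U$, $\|z_\e\|_{L^\infty}$ and $\|\Div z_\e\|_{L^\infty}$ are bounded, and each $d_\e$ satisfies the integral inequality \eqref{smoothy} with the same $\lambda$.

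Next I would pass to the limit $\e\to0$ along a subsequence. By Ascoli--Arzel\`a (the continuity of $G$ controlling the time modulus) $d_\e\to d$ uniformly on $[0,T]\times U$ with $(d-G)\in\Lip([0,T]\times U)$; at the same time $E_\e(t)\to E(t)$ in the Hausdorff sense for each $t$, and since $\p_\e\to\p$ one checks that $d=\dpE$ is the $\p$-signed distance to $\partial E(t)$. Banach--Alaoglu gives $z_\e\wsto z$ and $\Div z_\e\wsto\Div z$ in $L^\infty([0,T]\times U)$, whence $\Div z\in L^\infty$. Passing the $R'\Wpe$-condition to the limit, via $\Wpe\to\Wp$ in Hausdorff distance together with the convergence of the sets and the characterization of Remark~\ref{ramengo}, shows that each $E(t)$ satisfies the $R'\Wp$-condition. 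Finally, testing \eqref{smoothy} (written for $d_\e,z_\e$) against a spatial test function and letting $\e\to0$ — using $d_\e\to d$ and $\max_\tau|d_\e|\to\max_\tau|d|$ uniformly while $\Div z_\e\wsto\Div z$ — yields \eqref{smoothy}, hence \eqref{nonsmoothevol}, for the limit.

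The crucial point, and the \emph{main obstacle}, is the subdifferential inclusion $z\in\partial\po(\nabla\dpE)$ a.e., since $z_\e$ and $\nabla d_\e$ converge only weakly-$*$ and the defining relation is a product of the two. Recall that $z\in\partial\po(\nu)$ is equivalent to $\p(z)\le1$ and $z\cdot\nu=\po(\nu)$. The bound passes to the limit easily: as $\p\le\p_\e$ we have $\p(z_\e)\le\p_\e(z_\e)\le1$ pointwise, i.e.\ $z_\e$ takes values in the fixed convex set $\Wp$, a constraint preserved under weak-$*$ limits, so $\p(z)\le1$ a.e. For the identity I would use a compensated-compactness (div--curl) argument carried out by hand: writing $\nabla_x d_\e=\nabla_x d+\nabla_x(d_\e-d)$ one has, for $\psi\in C_c^\infty((0,T)\times U)$, after integrating the remainder by parts in the spatial variables,
\[
\int z_\e\cdot\nabla_x d_\e\,\psi\,=\,\int z_\e\cdot\nabla_x d\,\psi\,-\,\int\big((\Div z_\e)\,\psi+z_\e\cdot\nabla_x\psi\big)(d_\e-d).
\]
The first integral converges to $\int z\cdot\nabla_x d\,\psi$ because $z_\e\wsto z$ and $\nabla_x d\,\psi\in L^1$, while the second vanishes since $d_\e\to d$ uniformly and $z_\e,\Div z_\e$ are bounded in $L^\infty$. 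Since $z_\e\cdot\nabla_x d_\e=\po_\e(\nabla d_\e)=1$ a.e.\ (Euler's identity and the eikonal relation for the signed distance), the left-hand side equals $\int\psi$, whence $z\cdot\nabla_x\dpE=1=\po(\nabla\dpE)$ a.e., using that $\po(\nabla\dpE)=1$ a.e. Together with $\p(z)\le1$ this gives $z\in\partial\po(\nabla\dpE)$ a.e.\ and completes the proof.
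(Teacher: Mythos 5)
Your proposal is correct and follows essentially the same route as the paper: approximate $\p$ by smooth elliptic anisotropies $\p_\e\ge\p$ and $E_0$ via Lemma~\ref{lemmapproxcrystal}, run Theorem~\ref{thexistsmooth} on a common $[0,T]$ with common $R'$, $U$, $\lambda$, pass to limits (uniform for $d_\e$, weak-$*$ for $z_\e$ and $\Div z_\e$), and recover $z\in\partial\po(\nabla\dpE)$ from $\p(z)\le 1$ plus the identity $z\cdot\nabla\dpE=1$. Your div--curl computation is just a rearrangement of the paper's own integration-by-parts argument (the paper moves all derivatives onto $\psi$ and $\Div z_\e$, pairs the uniformly convergent $d_\e$ against the weak-$*$ convergent terms, then integrates back), so the two proofs are substantively identical.
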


\begin{remark}\textup{
Comparison and uniqueness for such flows has been shown in~\cite{BeNo:99,BCCN-vol,CN-compar}, although the most general result in these references
only covers the case of a time-dependent,
Lipschitz continuous forcing term $G(t)=G(0)+\int_0^t c(s)\,ds$, with $c\in L^\infty(0,+\infty)$.
}\end{remark}

\begin{proof} 
Let $\e>0$ and consider smooth and elliptic anisotropies
$(\p_\e,\po_\e)$, with $\p_\e\ge \p$, converging to $(\p,\po)$ locally uniformly as $\e\to 0$.
By the approximation result in Lemma~\ref{lemmapproxcrystal}, we can find a sequence
of sets $E_\e$ which satisfy the $R\Wpe$-condition,
and such that $\partial E_\e\to \partial E$ in the Hausdorff sense. 
For each $\e$ we consider the evolution $E^\e(t)$ given by Theorem~\ref{thexistsmooth}, 
with $0\le t\le T^\e$. 
Since the times $T^\e$ and the width of the neighborhoods $U^\e$
depend only on $R$ and $G$, up to extracting a subsequence 
we can assume that $\lim_\e T^\e=T$ for some $T>0$, and
there exists a neighborhood $U$ of $\partial E_0$ such that $\R^d\setminus U^\e$
converges to $\R^d\setminus U$ in the Hausdorff sense, as $\e\to 0$.
Possibly reducing $T$ and the width of $U$ we can then assume that $T^\e=T$ and $U^\e=U$ for all $\e>0$.

Letting $W:=[0,T]\times U$,
and
$z_\e(t,x)=\nabla \po_\e (\nabla d_{\p_\e}^{E_\e}(t,x))$, from \eqref{smoothevol} we get
\begin{equation}\label{smoothevoleps}
\left|\frac{\partial (d_{\p_\e}^{E_\e}-G)}{\partial t}(t,x)-\Div z_\e(t,x)
\right|\ \le\ \lambda|d_{\p_\e}^{E_\e}(t,x)|
\end{equation}
for a.e. $(t,x)\in W$,
where the constant $\lambda$ depends only on $R$ and $G$.

As $d_{\p_\e}^{E_\e}-G$ are uniformly Lipschitz in $(t,x)$, 
up to a subsequence we can assume that the functions $d_{\p_\e}^{E_\e}$
converge uniformly in any compact subset of $W$
to a function $\dpE$, such that for all $t\in [0,T]$ $\dpE(t,\cdot)$  
is the signed $\p$-distance function to the boundary of $E(t):=\{x:\,\dpE(t,x)\le 0\}$. 
Moreover, $E(0)=E_0$, $E(t)$ is the
Hausdorff limit of $E_{\e}(t)$ for each $t\in [0,T]$,
and satisfies the  $R'\Wp$-condition,
with $R'=\lim_\e R'_\e$.

Up to a subsequence we can also assume that there exists $z\in L^\infty(W)$
with $z_{\e}(t,x)\wsto z(t,x)$, 
$\Div z_{\e}\wsto \Div z$ and
$\partial_t (d^{\p_{\e}}_{E_{\e}}-G)\wsto \partial_t (\dpE-G)$ in $L^\infty(W)$, so that
\eqref{nonsmoothevol} holds a.e.~in $W$.

It remains to check that
$z(t,x)\in \partial \po(\nabla \dpE(t,x))$. Since by
construction $z(t,x)\in \Wp$ for a.e. $(t,x)\in W$, it is enough to show that
\begin{equation}\label{zwp}
z\cdot \nabla \dpE\ =\ \po(\nabla \dpE)\ =\ 1
\end{equation}
a.e.~in $W$. Recalling that $z_{\e}\cdot\nabla d_{\p_\e}^{E_\e}
= \po_\e (\nabla d_{\p_\e}^{E_\e})=1$ a.e.~in $W$ and letting $\psi\in C_c^\infty(W)$, 
we have
\[
\int_W \psi\,dx dt
\,=\, \int_W \psi \left(z_{\e}\cdot\nabla d_{\p_{\e}}^{E_{\e}}\right)\,dx dt
\, =\, -\int_W d_{\p_{\e}}^{E_{\e}} \left(z_{\e}\cdot\nabla \psi  
+ \psi \Div z_{\e}\right)\,dx dt.
\]
Passing to the limit in the righ-hand side we then get
$$
\int_W \psi\,dx dt
\,=\, 
-\int_W \dpE \left(z\cdot\nabla \psi  
+ \psi \Div z\right)\,dx dt\,=\,\int_W \psi\left(z\cdot \nabla\dpE
\right)\,dxdt
$$
which gives~\eqref{zwp}.
\end{proof}


\end{document}